%
%
%

\documentclass[graybox]{svmult}

\usepackage{ulem} 
\definecolor{forestgreen}{rgb}{0.13, 0.55, 0.13} 

\usepackage{type1cm}        
%
\usepackage{makeidx}         
\usepackage{graphicx}        
\usepackage{multicol}        
\usepackage[bottom]{footmisc}

\usepackage{newtxtext}       %
\usepackage[varvw]{newtxmath}       


\makeindex             


\newcommand{\dist}{\mathsf{d}}

\newcommand{\meas}{\mathfrak{m}}

\newcommand{\res}{\mathop{\hbox{\vrule height 7pt width .5pt depth 0pt
\vrule height .5pt width 6pt depth 0pt}}\nolimits}

\newcommand{\intav}{{\mathop{\int\kern-10pt\rotatebox{0}{\textbf{--}}}}}

\newcommand{\di}{\mathop{}\!\mathrm{d}}
\newcommand{\haus}{\mathscr{H}}
\newcommand{\Ch}{{\sf Ch}}

\definecolor{blue}{rgb}{0,0,0}  
\definecolor{forestgreen}{rgb}{0,0,0}  

\DeclareMathOperator{\RCD}{RCD}

\begin{document}

\title*{Locally homogeneous RCD spaces}
\author{Shouhei Honda, Artem Nepechiy}
\institute{Shouhei Honda \at  Graduate School of Mathematical Sciences, The University of Tokyo, \email{shouhei@ms.u-tokyo.ac.jp} 
\\
Artem Nepechiy \at  Institute for Algebra and Geometry, KIT, \email{artem.nepechiy@kit.edu}
}
%
%
\maketitle

\abstract*{ The goal of this note is to demonstrate how existing results can be adapted to establish the following result: A locally metric measure homogeneous $\RCD (K,N)$ space is isometric to a smooth Riemannian manifold. }

\abstract{ The goal of this note is to demonstrate how existing results can be adapted to establish the following result: A locally metric measure homogeneous $\RCD (K,N)$ space is isometric to, after multiplying a positive constant to the reference measure, a smooth Riemannian manifold with the Riemannian volume measure.}


\section{Introduction}\label{sec1}
\subsection{Results}
Let us start by recalling the main result of \cite{LN} by Lebedeva and the second named author:
\begin{theorem}[{\cite[Main Theorem]{LN}}]\label{maintheorem}
    Let \((M, g_0)\) be a locally homogeneous \(C^0\)-Riemannian manifold. 
    Then \((M, \dist_{g_0})\) is isometric to a smooth Riemannian manifold as metric spaces. 
\end{theorem}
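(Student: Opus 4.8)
The plan is to manufacture, near every point, a \emph{smooth} homogeneous model whose intrinsic distance agrees with \(\dist_{g_0}\), and then to glue these models into a single smooth Riemannian manifold isometric to \((M,\dist_{g_0})\). First I would record the soft metric--topological consequences of the continuity of \(g_0\). In any smooth chart the length structure of \(g_0\) is locally bi-Lipschitz to the Euclidean one, so \((M,\dist_{g_0})\) is a locally compact, locally contractible \emph{length} space whose topological dimension equals \(n=\dim M\); by the Hopf--Rinow theorem for length spaces, short minimizing geodesics exist and small closed balls are compact. Local homogeneity then says precisely that the pseudogroup \(\Gamma\) of locally defined \(\dist_{g_0}\)-isometries acts transitively: for all \(p,q\) some \(\gamma\in\Gamma\) carries a neighbourhood of \(p\) isometrically onto a neighbourhood of \(q\) with \(\gamma(p)=q\).

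Next I would extract a Lie group from \(\Gamma\). Fixing a base point \(o\) and a small ball \(B(o,r)\), the relevant local isometries are \(1\)-Lipschitz, hence equicontinuous, so by Arzel\`a--Ascoli together with local compactness their closure in the compact--open topology is a locally compact transformation group \(G\) whose orbit of \(o\) fills a neighbourhood of \(o\), with compact isotropy \(H\). Since \(G\) acts effectively on a connected topological manifold, the Montgomery--Zippin solution of Hilbert's fifth problem (absence of small subgroups) forces \(G\) to be a Lie group and the orbit map to identify a neighbourhood of \(o\) with an open subset of the smooth homogeneous space \(G/H\). Because \(H\) is compact, averaging any inner product on \(T_{eH}(G/H)\) over \(H\) produces a \(G\)-invariant \emph{smooth} Riemannian metric \(g_1\) on \(G/H\), and hence a smooth locally homogeneous metric near \(o\).

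It then remains to check that \(g_1\) reproduces \(\dist_{g_0}\) and that the local models are mutually compatible. Both \(\dist_{g_0}\) and \(\dist_{g_1}\) are \(G\)-invariant length metrics on (an open set of) \(G/H\) sharing the same isotropy, so the identification above is a \(G\)-equivariant bi-Lipschitz map between two invariant length structures; a rescaling/first-variation argument exploiting transitivity should upgrade this to a genuine metric isometry on a smaller ball. Transporting the model by the transitive action makes the construction independent of the base point up to isometry, and the germ-rigidity of local isometries in the homogeneous model makes overlapping charts agree smoothly, so the local \(g_1\)'s glue to a global smooth Riemannian metric on \(M\) inducing \(\dist_{g_0}\).

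I expect the principal obstacle to be exactly this last synthesis. The delicate points are the passage from the pseudogroup \(\Gamma\) to an honest locally compact group \(G\) acting transitively on a fixed neighbourhood, so that Montgomery--Zippin genuinely applies, and then proving that the smooth invariant distance equals \(\dist_{g_0}\) \emph{on the nose} rather than merely up to bi-Lipschitz equivalence --- for a priori the continuous tensor \(g_0\) need not be recoverable from its length structure by any smooth comparison, and it is precisely homogeneity that must be leveraged to rule this out.
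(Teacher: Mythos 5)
Your outline matches the broad strategy of \cite{LN} (homogeneous-space construction, Hilbert's fifth problem, an invariant metric, upgrading bi-Lipschitz to isometric), but two steps that you flag as ``delicate'' are in fact the substance of the proof, and as written they fail. First, the passage from the pseudogroup $\Gamma$ to a group $G$: because the space is only \emph{locally} homogeneous, the local isometries are defined on balls whose radii shrink with the points, so there is no fixed neighbourhood on which they all act and Arzel\`a--Ascoli does not hand you a transformation group --- compositions are simply not defined. The missing idea is a uniform extension property: every local isometry $f\colon B_r(p)\to B_r(q)$ inside a suitable $U$ extends to $F\colon B_R(p)\to B_R(q)$ with $R$ independent of $p,q,r$. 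In \cite{LN} this is obtained by first applying the Hilbert--Smith theorem to conclude that $\operatorname{Iso}_p(B_r(p))$ is a Lie group; only then does one get a \emph{local} group $U_G$, and one must invoke the resolution of Hilbert's fifth problem \emph{for local groups} (not the Montgomery--Zippin transitive-action theorem, which presupposes exactly the global action you do not have) to see that $U_G$ is a local Lie group, whose quotient $U_G/U_H$ by the stabilizer is the smooth model.

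Second, your construction of the invariant metric $g_1$ by averaging an arbitrary inner product over $H$ cannot work: invariant metrics on $G/H$ are in general a multi-parameter family (e.g.\ when the isotropy representation is reducible, or $H$ is trivial), all mutually bi-Lipschitz and all $G$-invariant, yet pairwise non-isometric, so ``$G$-invariant length metric bi-Lipschitz to $\dist_{g_0}$'' does not pin down $\dist_{g_0}$, and no rescaling or first-variation argument can close that gap. The proof in \cite{LN} instead makes the canonical homeomorphism $h\colon B_r(p)\to U_G/U_H$ Lipschitz, applies Rademacher's theorem to find a point where its differential is an isomorphism, and \emph{pulls back $g_0$ through that differential} to select the correct $H$-invariant inner product at the base point; the left-invariant extension of that specific inner product is the metric for which one then verifies, via preservation of lengths of absolutely continuous curves under the equivariance $f\circ h\circ f^{-1}=h$, that $h$ is an isometry. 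Without the Rademacher step your $g_1$ bears no quantitative relation to $g_0$ beyond bi-Lipschitz equivalence.
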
 
In the above a \textit{\(C^0\)-Riemannian manifold} is a pair \((M, g_0)\) consisting of a \(C^1\)-manifold \(M\) together with a continuous Riemannian metric \(g_0\). The Riemannian metric \(g_0\) induces a canonical length structure, which in turn induces an intrinsic distance function \(\dist_{g_0}\) on \(M\).
\par 

We call a metric space $(X, \dist)$ \textit{locally (metric) homogeneous} if for any two points of $X$ there is a
local isometry taking one to the other (see Definition \ref{def: local homog} for the precise one). 
Note that the local homogeneity does not imply the global one in general because of considering compact surfaces of constant negative curvature.
A direct consequence of Theorem \ref{maintheorem} is:
\begin{corollary}[{\cite[Corollary]{LN}}]\label{Alexandrov}
Any locally homogeneous Alexandrov space of finite dimension is isometric to a smooth Riemannian manifold.
\end{corollary}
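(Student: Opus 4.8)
The plan is to reduce Corollary \ref{Alexandrov} to Theorem \ref{maintheorem} by exhibiting a finite-dimensional locally homogeneous Alexandrov space $(X,\dist)$ as a locally homogeneous $C^0$-Riemannian manifold. Write $n$ for the (integer) Hausdorff dimension of $X$. The decisive first step is to show that \emph{every} point of $X$ is regular, i.e.\ has tangent cone isometric to $\mathbb{R}^n$. By the structure theory of Alexandrov spaces of Burago--Gromov--Perelman, the regular set $X_{\mathrm{reg}}$ is dense in $X$ (its complement has Hausdorff dimension at most $n-1$), so $X_{\mathrm{reg}}$ is nonempty. On the other hand regularity is a local-isometry invariant: a local isometry carrying a point $p$ to a point $q$ induces an isometry of the associated tangent cones $T_pX\cong T_qX$, whence $p$ is regular if and only if $q$ is. Since local homogeneity relates any two points of $X$ by such a local isometry, the existence of a single regular point forces $X=X_{\mathrm{reg}}$; in particular $X$ has no boundary and is, topologically, an $n$-manifold.

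Second, I would invoke the Riemannian structure of Alexandrov spaces due to Otsu--Shioya: on the regular set, which we have just identified with all of $X$, there is a canonical $C^1$-manifold structure together with a continuous Riemannian metric $g_0$ whose induced length distance $\dist_{g_0}$ coincides with the Alexandrov distance $\dist$. Thus $(M,g_0):=(X,g_0)$ is a $C^0$-Riemannian manifold in the sense of the excerpt, and the identity map realizes an isometry $(M,\dist_{g_0})\to(X,\dist)$ of metric spaces.

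Third, because $\dist_{g_0}=\dist$, the local isometries supplied by the homogeneity hypothesis on $(X,\dist)$ are precisely local isometries of $(M,\dist_{g_0})$; hence $(M,g_0)$ is a locally homogeneous $C^0$-Riemannian manifold. Theorem \ref{maintheorem} then applies and yields that $(X,\dist)=(M,\dist_{g_0})$ is isometric, as a metric space, to a smooth Riemannian manifold, which is the assertion.

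I expect the main obstacle to lie in the first step, specifically in justifying cleanly that regularity is preserved under the local isometries provided by homogeneity, and therefore that the singular stratification is empty and $X$ is genuinely a manifold. Once this is in place, the remainder is essentially bookkeeping: quoting the Otsu--Shioya $C^0$-Riemannian structure on the regular set and transporting the homogeneity across the isometry $\dist_{g_0}=\dist$ so that Theorem \ref{maintheorem} can be invoked verbatim.
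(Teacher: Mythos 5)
Your reduction is correct and is essentially the route the paper intends: Corollary \ref{Alexandrov} is presented there simply as a direct consequence of Theorem \ref{maintheorem} (deferring to \cite{LN}), and the argument behind that is precisely yours --- regularity of a point is an invariant of the pointed isometry type of small balls (hence of tangent cones), so local homogeneity together with the Burago--Gromov--Perelman fact that regular points have full measure forces every point to be regular, after which the Otsu--Shioya $C^0$-Riemannian structure on the regular set (inducing the original distance) puts you in the hypotheses of Theorem \ref{maintheorem}. The one point worth making explicit is that the finer parts of Otsu--Shioya's theorem (the weak $C^1$/differentiability statements) are only asserted on a full-measure subset $M^*$ of the regular set; since $M^*$ is defined in purely metric terms, the same invariance-plus-homogeneity argument you use for regularity upgrades $M^*$ to all of $X$, and the paper moreover remarks that the $C^1$-manifold requirement in Theorem \ref{maintheorem} can be circumvented, so this is a cosmetic rather than substantive gap.
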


An \textit{Alexandrov space} is, by definition, a metric space with a lower bound of sectional curvature in a synthetic sense. In connection with Corollary \ref{Alexandrov}, it is natural to ask whether an analogous result is satisfied or not, in the case of Ricci curvature (instead of sectional curvature)?

The aim of this note is to provide a positive answer to this question. The precise statement is as follows. See section \ref{secpre} for the preliminary on $\RCD$ spaces. 

\begin{theorem}[Locally metric measure homogeneous RCD\((K,N)\) spaces are smooth]\label{LHRCD}
  Let \((X, \dist, \meas)\) be a locally metric measure homogeneous (see Defintion \ref{def: locally measure homogeneous}) $\RCD(K,N)$ space of essential dimension $n$ for some \(K \in \mathbb{R}\) and some \(N \in [1, \infty)\). Then $(X,\dist)$ is isometric to a smooth Riemannian manifold and $\meas$ coincides with the Hausdorff measure $\haus^n$ of dimension $n$, up to multiplying a positive constant to the measure, namely $\haus^n=c\meas$ for some $c>0$.
\end{theorem}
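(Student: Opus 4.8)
The plan is to reduce the statement to Theorem~\ref{maintheorem} by exhibiting on $X$ the structure of a locally homogeneous $C^0$-Riemannian manifold, while extracting the measure normalization from a density computation. Accordingly the argument splits into three parts: (i) promoting the almost-everywhere regularity theory of $\RCD(K,N)$ spaces to \emph{everywhere}-regularity by means of the local isometries supplied by homogeneity, and simultaneously identifying $\meas$ with a constant multiple of $\haus^n$; (ii) upgrading this to a $C^1$-manifold carrying a continuous Riemannian metric $g_0$ with $\dist_{g_0}=\dist$; and (iii) invoking Theorem~\ref{maintheorem}.

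First I would show that every point of $X$ is $n$-regular. By the structure theory of $\RCD(K,N)$ spaces the $n$-regular set $\mathcal{R}_n$, consisting of points whose unique tangent is $(\mathbb{R}^n, d_{\mathrm{eucl}}, c_n\haus^n, 0)$, has full $\meas$-measure and in particular is non-empty. Since a measure-preserving local isometry maps pointed tangent cones to pointed tangent cones, the property of being $n$-regular is preserved by the local isometries coming from homogeneity; transporting one regular point to an arbitrary $q\in X$ yields $X=\mathcal{R}_n$. The same transport argument applies to the density $\theta(x):=\lim_{r\to 0}\meas(B_r(x))/(\omega_n r^n)$, which the local isometries force to be constant on $X$. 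Combined with the identity $\meas\res\mathcal{R}_n=\theta\,\haus^n\res\mathcal{R}_n$, this gives $\meas=\theta\,\haus^n$ with $\theta$ a positive constant, already proving $\haus^n=c\,\meas$ and showing that $(X,\dist,\haus^n)$ is non-collapsed.

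Next, working with the non-collapsed space in which every point is regular, I would produce the $C^0$-Riemannian structure required by Theorem~\ref{maintheorem}. Everywhere-regularity together with non-collapsing makes the Reifenberg-type and $\delta$-splitting-map machinery available at every point and at every sufficiently small scale, yielding uniform bi-Hölder charts onto Euclidean balls; in particular $X$ is a topological manifold of dimension $n$. The remaining and, I expect, hardest task is to endow $X$ with a $C^1$-atlas and a continuous Riemannian metric $g_0$ satisfying $\dist_{g_0}=\dist$. The natural candidate for $g_0$ is the canonical Riemannian tensor of the $\RCD$ differential calculus read off through (harmonic or splitting) coordinate functions, but this object is a priori defined only $\meas$-almost everywhere, so both the differentiability of the transition maps and the continuity of $g_0$ are genuinely in question.

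Here homogeneity should be decisive. Since arbitrarily small balls centred at any two points of $X$ are measure-preservingly isometric, the local geometry, and hence the coordinate expression of the metric tensor, cannot truly oscillate from point to point; this rigidity is what I would exploit to select continuous representatives and a compatible $C^1$-structure, so that $(X,g_0)$ becomes a $C^0$-Riemannian manifold with $\dist_{g_0}=\dist$. Once this is achieved, every local isometry of $(X,\dist)$ is a local isometry of $(X,\dist_{g_0})$, so $(X,g_0)$ is a locally homogeneous $C^0$-Riemannian manifold and Theorem~\ref{maintheorem} shows that $(X,\dist)$ is isometric to a smooth Riemannian manifold. Together with the identity $\meas=\theta\,\haus^n$ from the first step, this completes the proof.
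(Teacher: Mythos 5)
Your first step is correct and coincides with how the paper begins: transporting $n$-regularity and the density $\theta(x)=\lim_{r\to 0^+}\meas(B_r(x))/(\omega_n r^n)$ by the measure-preserving local isometries gives $X=\mathcal{R}_n$, $\meas=c\,\haus^n$, and the non-collapsed condition (Lemmas \ref{meashaus} and \ref{top}). The gap is your step (ii). You propose to reduce to Theorem \ref{maintheorem} by endowing $X$ with a $C^1$-atlas and a continuous Riemannian metric $g_0$ satisfying $\dist_{g_0}=\dist$, and your justification is that homogeneity prevents the coordinate expression of the metric tensor from ``oscillating''. That is not an argument, and the paper explicitly identifies this reduction as unavailable: a priori Theorem \ref{maintheorem} does not imply Theorem \ref{LHRCD} precisely because it is unclear whether a locally metric measure homogeneous $\RCD(K,N)$ space is isometric to a $C^0$-Riemannian manifold. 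Concretely, the charts produced by the Reifenberg machinery are only bi-H\"older on open sets, the $(1\pm\epsilon)$-bi-Lipschitz splitting charts of Theorem \ref{reduced} live only on Borel sets of large measure rather than on neighborhoods, the canonical metric $\langle\nabla\phi_i,\nabla\phi_j\rangle$ is an $L^1$-object, and the local isometries supplied by homogeneity identify balls with balls but do not intertwine any chosen system of charts; they therefore provide no mechanism for selecting continuous representatives of the metric coefficients, let alone $C^1$-compatible transition maps. Your plan thus stalls exactly at its central step.

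The paper's route avoids this issue altogether. It first proves directly that the isometry group of a ball in an $\RCD(K,N)$ space is a Lie group (Proposition \ref{Lie localization}) --- an ingredient absent from your outline --- and uses it to run the local-group and quotient construction of \cite{LN} inside $X$ itself, producing a homeomorphism $h$ from a neighborhood in $X$ onto an open subset of a \emph{smooth} homogeneous model $U_G/U_H$. The new analytic content (Sections \ref{secleb}--\ref{secfinal}) is then to define a blow-up derivative of $h$ at a single good point via harmonic splitting maps and Lebesgue-point arguments, push the a.e.-defined RCD metric forward to a scalar product on one tangent space of $U_G/U_H$, extend it by left invariance to a smooth Riemannian metric $g$, and verify that $h$ is an isometry onto $(U_G/U_H,\dist_g)$ by a metric-derivative computation along absolutely continuous curves. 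In other words, the smooth structure is manufactured on the model side, not on $X$; nothing in your proposal substitutes for this, so as written the proof does not go through.
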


Theorem \ref{LHRCD} is justified along with analogous ideas in the proof of Theorem \ref{maintheorem}, combining with the recent developments on $\RCD$ spaces. It is worth mentioning that a priori Theorem \ref{maintheorem} does not imply  Theorem \ref{LHRCD} because it is unclear whether the metric structure of a locally metric measure homogeneous RCD\((K,N)\) space is isometric to a $C^0$-Riemannian manifold. 
A detailed inspection of the proof yields that one can circumvent the requirement for the space to be a $C^1$-manifold in order for the argument to go through.  

Finally let us provide an application of Theorem \ref{LHRCD}.
\begin{corollary}\label{finite}
For all $n \in \mathbb{N}, K \in \mathbb{R}, d, r, v \in (0, \infty)$ there exists $\epsilon=\epsilon(n, K, d, r, v)>0$ such that the following hold: Let $\mathcal{M}:=\mathcal{M}(n, K, d, r, v)$ be the set of all isometry classes of non-collapsed $\RCD(K, n)$ spaces $(X, \dist, \haus^n)$ with $\mathrm{diam} X \le d, \haus^n(X) \ge v$ and 
\begin{equation*}\label{alsmos}
    \dist_{\mathrm{pGH}}\left((B_s(x), x), (B_s(y), y)\right)\le \epsilon s,\quad \text{for all $x, y \in X$ and $0<s \le r$,}
    \end{equation*}
    where $\dist_{\mathrm{pGH}}$ denotes the pointed Gromov-Hausdorff distance and $B_s(x)$ denotes the open ball of radius $s$ centered at $x$.
    Then:
\begin{enumerate}
    \item  \textcolor{forestgreen}{A}ny non-collapsed $\RCD(K, n)$ space $(X, \dist, \haus^n)$ belonging to $\mathcal{M}$ is homeomorphic to a locally 
    homogeneous compact Riemannian manifold. Moreover if $(X, \dist)$ is smooth, then the homeomorphism can be improved to be a diffeomorphism\textcolor{forestgreen}{.}
    \item $\mathcal{M}$ has only finitely many members up to homeomorphism:
    \item \textcolor{forestgreen}{D}enoting by $\mathcal{M}_{C^{\infty}}$ the set of all smooth elements in $\mathcal{M}$, $\mathcal{M}_{C^{\infty}}$ has only finitely many members up to  diffeomorphism.
\end{enumerate}
\end{corollary}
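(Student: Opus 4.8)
The plan is to combine Gromov's precompactness with the non-collapsed $\RCD$ structure theory, Theorem \ref{LHRCD}, and the stability of the (co)homeomorphism type under non-collapsed convergence; the sought $\epsilon$ will be the minimum of the finitely many thresholds produced below. First I record that, for any fixed $\epsilon$, the family $\mathcal{M}$ is precompact in the Gromov--Hausdorff topology (Gromov, using $\mathrm{diam}\le d$ and the lower Ricci bound) and that every limit is again a non-collapsed $\RCD(K,n)$ space: the $\RCD(K,n)$ condition is stable, $\mathrm{diam}\le d$ passes to the limit, and by volume continuity along non-collapsed convergence $\haus^n\ge v$ survives, while the almost-homogeneity inequality is closed under pointed Gromov--Hausdorff convergence. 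Hence $\overline{\mathcal{M}}$ consists of members of the same type and is compact.

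Next I would fix $\epsilon$ below a dimensional gap threshold to force regularity. Given $X\in\mathcal{M}$, a point $x$, and a regular point $y$ (these have full $\haus^n$-measure), rescaling the inequality $\dist_{\mathrm{pGH}}((B_s(x),x),(B_s(y),y))\le\epsilon s$ by $s^{-1}$ and letting $s\to 0$ shows that every tangent cone at $x$ is $\epsilon$-close to $\mathbb{R}^n$ at unit scale; since tangent cones of non-collapsed $\RCD(K,n)$ spaces are non-collapsed metric measure cones, the volume gap (almost-rigidity) theorem forces such a cone to be $\mathbb{R}^n$ once $\epsilon$ is small. Thus the singular set of every $X\in\mathcal{M}$, and of every element of $\overline{\mathcal{M}}$, is empty, so each is a topological manifold, bi-H\"older homeomorphic to a smooth one via the intrinsic Reifenberg theorem.

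For statement (1) I would argue by contradiction with $\epsilon_j=1/j\to 0$: a sequence $X_j$ violating the conclusion subconverges to a limit $X_\infty$ obeying the inequality with $\epsilon=0$, i.e. $\dist_{\mathrm{pGH}}((B_s(x),x),(B_s(y),y))=0$ for all $x,y$ and $0<s\le r$. For the non-collapsed compact space $X_\infty$ this means the pointed balls are isometric; taking $s=r$ yields a local isometry carrying $x$ to $y$, and since non-collapsing makes $\meas=\haus^n$ a metric invariant, $X_\infty$ is locally metric measure homogeneous in the sense of Definition \ref{def: locally measure homogeneous}. By Theorem \ref{LHRCD}, $X_\infty$ is isometric to a smooth, hence locally homogeneous, compact Riemannian manifold $M$. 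Topological stability of non-collapsed $\RCD(K,n)$ convergence to a smooth manifold then forces $X_j$ to be homeomorphic to $M$ for large $j$, contradicting the choice of $X_j$; running the same argument with smooth stability gives the diffeomorphism refinement when $X_j$ is smooth.

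Finally, statements (2) and (3) follow by a covering argument over the compact set $\overline{\mathcal{M}}$. Each $X_\infty\in\overline{\mathcal{M}}$ is a manifold by the gap argument above, so non-collapsed convergence to $X_\infty$ upgrades to an eventual homeomorphism (intrinsic Reifenberg / topological stability); covering $\overline{\mathcal{M}}$ by finitely many such stability neighborhoods bounds the number of homeomorphism types. For (3) the same covering, now with smooth stability and the diffeomorphism-finiteness machinery, applies to $\mathcal{M}_{C^\infty}$, using that the uniform almost-homogeneity together with the manifold structure yields the uniform harmonic-radius ($C^{1,\alpha}$) control these results require. I expect the main obstacle to be exactly this stability input: showing that Gromov--Hausdorff proximity to the (smooth, locally homogeneous) limit can be promoted to a homeomorphism---and, in the smooth category, a diffeomorphism---of the entire space. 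This is the deepest ingredient, where the full force of the non-collapsed convergence theory (vanishing singular set, Reifenberg regularity, and smoothing) is needed; the remaining steps are soft compactness arguments and a direct appeal to Theorem \ref{LHRCD}.
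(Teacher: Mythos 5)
Your treatment of statement (1) is essentially the paper's argument: contradiction along $\epsilon_j\to 0$, extraction of a non-collapsed compact limit, the observation that the limit is locally metric measure homogeneous (using that for $\meas=\haus^n$ metric homogeneity upgrades to metric measure homogeneity via (\ref{liphaus})), smoothness via Theorem \ref{LHRCD}, and topological stability \cite[Theorem A.1.2]{CheegerColding1}; that part is fine. One incidental error: in your second paragraph you claim that the volume gap forces every tangent cone of an $X\in\mathcal{M}$ to \emph{be} $\mathbb{R}^n$, so that the singular set is empty. No such gap theorem holds: a non-collapsed metric cone whose unit ball is $\epsilon$-close to $B_1(0_n)$ need not be isometric to $\mathbb{R}^n$ (cones over round spheres of radius close to $1$ are counterexamples). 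What does follow, and what you actually need, is uniform Reifenberg flatness (Theorem \ref{thm:reifenberg}) and hence the topological manifold structure; you should only claim that.

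More seriously, your covering argument for (3) has a genuine gap. You cover the closure of $\mathcal{M}(\epsilon)$ for a \emph{fixed} $\epsilon>0$ by ``smooth stability neighborhoods,'' but the limit points of $\mathcal{M}_{C^{\infty}}(\epsilon)$ are only $\epsilon$-almost homogeneous, so Theorem \ref{LHRCD} does not apply to them and there is no reason for them to be smooth Riemannian manifolds; the diffeomorphism stability theorem \cite[Theorem A.1.12]{CheegerColding1} requires a smooth (at least $C^{1,\alpha}$) limit. Your asserted ``uniform harmonic-radius control from almost-homogeneity'' is precisely the missing ingredient, and it does not follow from a fixed positive $\epsilon$; there is also a circularity in declaring $\epsilon$ to be the minimum of thresholds that are themselves extracted from a covering of $\mathcal{M}(\epsilon)$. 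The paper avoids both problems by running (3) as a contradiction with $\epsilon_i\to 0$: a sequence of pairwise non-diffeomorphic members subconverges to an \emph{exactly} locally homogeneous limit, which is smooth by Theorem \ref{LHRCD}, whence \cite[Theorem A.1.12]{CheegerColding1} forces eventual diffeomorphism. You should restructure (3) (and, for safety, (2)) in this way; for (2) your covering argument can be salvaged because uniform Reifenberg flatness of all members suffices for the purely topological stability statement.
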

We refer \cite{Huang} for similar results {\color{blue}and see \cite{Santos} for a related result}.
\subsection{How to prove the main result}
\subsubsection{Lie group structure}
In \cite[Lemma 3.5]{LN} the smooth structure and the Riemannian metric was used to establish that the  group of isometries of a ball ${B}_r(p)$ fixing the point $p$ is a  Lie group. We intend to prove this result directly for RCD\((K,N)\) spaces. In the context of RCD\((K,N)\) spaces, the statements in this direction are known. For example the isometry group of an RCD\((K,N)\) space is a Lie group (see \cite[Theorem A]{zbMATH07027835} and \cite[Corollary 1.2]{zbMATH06920925}). In general a ball ${B}_r(p)$ in an RCD\((K,N)\) space needs not to be an RCD\((K,N)\)\textcolor{forestgreen}{\sout{-}}space itself. Therefore the mentioned results do not imply that the isometry group of ${B}_r(p)$ is a Lie group. However, we will obtain the following localization result:

\begin{proposition}[Isometry group of a ball in an RCD\((K,N)\) space is a Lie group]\label{Lie localization}
    Let \((X, \dist, \mathfrak{m})\) be an $\RCD(K,N)$ space for some \(K \in \mathbb{R}\) and some \(N \in [1, \infty)\). Then for any $p \in X$ and all $r>0$ the groups 
    $$ \operatorname{Iso}(B_r(p))= \left \lbrace f \colon B_r(p) \rightarrow B_r(p) \vert \, \,   f \text{ isometry between metric spaces}  \right \rbrace, $$
    $$ \operatorname{Iso}_p(B_r(p))= \left \lbrace f \colon B_r(p) \rightarrow B_r(p) \vert \, \,  f(p)=p , f \text{ isometry between metric spaces}  \right \rbrace $$
    are  Lie groups.
\end{proposition}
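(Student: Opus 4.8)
The plan is to deduce the statement from the solution of Hilbert's fifth problem: a locally compact (second countable) Hausdorff topological group is a Lie group if and only if it has no small subgroups (Gleason--Montgomery--Zippin--Yamabe). Since $\operatorname{Iso}_p(B_r(p))$ is the stabilizer of $p$ and hence a closed subgroup of $\operatorname{Iso}(B_r(p))$, Cartan's closed subgroup theorem reduces everything to proving that $G:=\operatorname{Iso}(B_r(p))$ is a Lie group. I would equip $G$ with the topology of uniform convergence on compact subsets; because $\RCD(K,N)$ spaces with $N<\infty$ are proper, this coincides with the compact-open topology and makes $G$ a topological group acting on $B_r(p)$ by isometries.

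First I would establish that $G$ is locally compact. Every element of $G$ is $1$-Lipschitz, so any family of isometries is equicontinuous; since $\overline{B}_s(p)$ is compact for $s<r$ and an isometry $f$ satisfies $\dist(f(x),f(p))=\dist(x,p)$, all orbit points remain in the compact set $\overline{B}_r(p)$. Arzel\`a--Ascoli then yields that $\{f\in G:\dist(f(p),p)\le \rho\}$ is relatively compact for every $\rho<r$. The one subtlety is that $B_r(p)$ is open: I would check that a uniform limit $f$ of surjective isometries is again surjective by simultaneously passing to the limit along the inverses $f_k^{-1}$, obtaining a two-sided inverse and hence $f\in G$. This shows $G$ is locally compact; the same argument applied to isometries fixing $p$ shows, as a by-product, that $\operatorname{Iso}_p(B_r(p))$ is in fact compact.

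The heart of the matter is the \emph{no small subgroups} property, and here the $\RCD$ structure enters exactly as in the global results \cite{zbMATH07027835,zbMATH06920925}. By the structure theory of $\RCD(K,N)$ spaces, $\meas$-almost every point is $n$-regular, with tangent cone isometric to $(\mathbb{R}^n,\dist_{\mathrm{eucl}})$; in particular such a regular point $x_0$ exists inside $B_r(p)$. Blowing up at $x_0$ sends any isometry fixing $x_0$ to a linear isometry of the tangent cone, giving a homomorphism $\operatorname{Iso}_{x_0}(B_r(p))\to O(n)$, $f\mapsto \di f_{x_0}$. The decisive rigidity statement is that this homomorphism is \emph{faithful}: if $f$ fixes $x_0$ and $\di f_{x_0}=\mathrm{id}$, then $f=\mathrm{id}$. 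I would prove this by a unique continuation argument: a trivial derivative forces $f$ to fix every minimizing geodesic issuing from $x_0$, hence a whole neighborhood of $x_0$; the fixed-point set of $f$ is therefore open, it is obviously closed, and since $B_r(p)$ is connected it must be all of $B_r(p)$.

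Granting this rigidity, $\operatorname{Iso}_{x_0}(B_r(p))$ embeds as a closed subgroup of the compact Lie group $O(n)$ and is thus itself a compact Lie group. To obtain the no small subgroups property for $G$ I would argue with iterates: if $f\in G$ is close to the identity but $f\neq\mathrm{id}$, then by rigidity its $1$-jet at $x_0$ is nontrivial and, via blow-up at $x_0$, corresponds to a nontrivial rigid motion $Df$ of the tangent cone $\mathbb{R}^n$ that is close to $\mathrm{id}\in\operatorname{Iso}(\mathbb{R}^n)$. Because the Euclidean isometry group $\operatorname{Iso}(\mathbb{R}^n)=\mathbb{R}^n\rtimes O(n)$ has no small subgroups, some iterate $(Df)^m$ leaves a fixed neighborhood of the identity; matching the $1$-jet of $f^m$ with $(Df)^m$ to leading order then forces $f^m$ to leave a fixed neighborhood of $\mathrm{id}$ in $G$. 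Hence a sufficiently small neighborhood of the identity in $G$ contains no nontrivial subgroup, so $G$ is a Lie group by Gleason--Yamabe, and consequently so is its closed subgroup $\operatorname{Iso}_p(B_r(p))$.

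I expect the rigidity step---propagating the identity from the $1$-jet at a single regular point to the whole ball---to be the main obstacle, since it requires the fine structure of geodesics emanating from a regular point (their correspondence with directions in the tangent cone, together with enough uniqueness) to hold on the \emph{open} ball rather than on a complete $\RCD$ space. A closely related technical difficulty is making the comparison between the $1$-jet of $f^m$ and $(Df)^m$ precise despite the displacement of the basepoint $x_0$ under $f$; the open-ball surjectivity in the local compactness step is by comparison only a minor point.
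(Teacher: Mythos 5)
Your reduction to the no-small-subgroups criterion and the local compactness step (Arzel\`a--Ascoli plus passing to the limit along inverses) match the paper, but from there you diverge onto a Myers--Steenrod-type route (faithful derivative at a fixed regular point, unique continuation, iterate escape), and this route has genuine gaps at exactly the points you flag. The decisive one is the rigidity claim: at an $n$-regular point of an $\RCD(K,N)$ space the ``derivative'' of an isometry fixing $x_0$ is only a subsequential blow-up limit, defined up to the choice of Gromov--Hausdorff identifications, and even granting a well-defined $\di f_{x_0}=\mathrm{id}$ this only yields $\dist(f(y),y)=o(\dist(y,x_0))$ as $y\to x_0$; it does not force $f$ to fix a neighborhood of $x_0$. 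The step ``a trivial derivative forces $f$ to fix every minimizing geodesic issuing from $x_0$'' silently uses that a geodesic from $x_0$ is determined by its initial direction in the tangent cone, which is an exponential-map statement unavailable on $\RCD$ spaces (and the subsequent open--closed argument also needs the \emph{interior} of $\mathrm{Fix}(f)$ to be closed, which is not automatic). The same missing control dooms the iterate step: without a quantitative statement that the $1$-jet governs $f$ at a definite scale, you cannot match $f^m$ with $(Df)^m$, and for a genuine small \emph{subgroup} there is in general no common fixed point at which to take derivatives in the first place.

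The paper avoids all of this by arguing by contradiction at a well-chosen point and scale rather than at a fixed point: given small subgroups $H_i$, it uses the density and weak convexity (path-connectedness) of $\mathcal{R}_n$ (Theorems \ref{Deng theorem regular set} and \ref{Deng theorem weakly connected}), Reifenberg flatness (Theorem \ref{thm:reifenberg}, or \cite{deng2020holder} in the collapsed case), and two applications of the intermediate value theorem to produce points $p_i$ and scales $r_i\to 0^+$ with $D_{H_i,r_i}(p_i)=\tfrac{1}{20}r_i$ and $B_{r_i}(p_i)$ almost Euclidean. Equivariant pointed GH convergence \cite{FY} then yields a nontrivial closed subgroup $H<\operatorname{Isom}(\mathbb{R}^n)$ with $D_{H,1}(0_n)\le\tfrac{1}{20}$, which is impossible. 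This rescaling mechanism is precisely what replaces your faithfulness-plus-iteration argument, and it is the content you would need to supply to close the gaps; as written, your proposal assumes the hard part.
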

{\color{blue}Note that the proof is essentially same to that of the global result (namely the ball is replaced by $X$) following \cite{CN}, together with results in \cite{deng2020holder}. We will provide a self-contained proof for readers' convenience.
}
\subsubsection{Constructing an isometry}\label{constiso}
{\color{blue}Firstly let us provide a brief summary of the proof of Theorem \ref{maintheorem}.}\par
{\color{blue} Observe that Theorem \ref{maintheorem} is a local statement. Therefore it is enough to construct locally an isometry from the $C^0$-manifold $M$ to a smooth Riemannian manifold. The smooth Riemannian manifold will appear via a homogeneous space construction, which will use the local isometries of $M$. \par 
If one fixes a point $p \in M$ and considers an appropriately chosen small neighborhood $U$ of $p$ one has the problem that the set of local isometries between subsets of $U$ is a priori not even a local group, since there is no meaningful way to define the composition of maps satisfying the axioms of a local group (for definitions see
\cite{spiro1993remark}, \cite{spiro1992lie}, \cite{mostow1950extensibility}, \cite{MR1088511}). From this set however, one can construct a local group $U_G$, if one would know that for $B_r(p),B_r(q) \subset U$ every local isometry $f \colon B_r(p)  \rightarrow B_r(q)$ could be extended to a ball of fixed radius $F \colon B_R(p) \rightarrow B_R(q)$ (independent of $p,q,r$). The key to proving such an extension property in \cite{LN} is to apply the Hilbert-Smith theorem and to conclude that $\operatorname{Iso}_p(B_r(p))$ is a Lie group.  \par 
Once one has established that $U_G$ is a local group, once can use the resolution of Hilbert's fifth Problem for local groups to conclude that $U_G$ must also be a local Lie group. \par 
For local groups there is a meaningful way of defining a quotient. If we take the quotient of the local Lie group $U_G$ by the stabilizer of $p$, which we denote by $U_H$,  we obtain a smooth manifold together with a homeomorphism from it to an open subset of $M$.\par 
It remains to show that this homeomorphism is indeed an isometry, if one endows the quotient $U_G/U_H$ with the correct metric. This happens the following way: One can construct some metric on $U_G/U_H$ such that the canonical homeomorphism becomes a Lipschitz map. Using Rademacher's theorem and the fact that we are dealing with smooth manifolds one gets that the canonical map is differentiable almost everywhere and at some point the differential is an isomorphism. Using this isomorphism between tangent spaces one can pull back the original metric at a point and define a left invariant metric on the quotient. The final step is to prove that the canonical homeomorphism is indeed an isometry if $U_G/U_H$ carries this left invariant metric.}

 {\color{blue}Let us turn to our setting.} Using Proposition \ref{Lie localization} the proof {\color{blue}of Theorem \ref{maintheorem}} in \cite{LN} {\color{blue}explained above} can be carried out verbatim 
 until the very last step right after \cite[Corollary 4.5]{LN}. At this point one has a homeomorphism $$h \colon B_r(p) \subset (X,\dist) \rightarrow U \subset  U_G/U_H$$ between neighborhoods around a point $p \in X$ and a neighborhood $U$ in a {\color{blue} smooth Riemannian manifold} $U_G/U_H$, on which $U_G$ acts by left multiplication $L_f$.\par
At this point one would need to construct a local isometry between the RCD\((K,N)\) space $(X,\dist,\mathfrak{m})$ and the local smooth model space $U_G/U_H$ with a yet to be determined left invariant Riemannian metric $g$. In \cite{LN} one could make $h$ into a Lipschitz map and pull back the continuous Riemannian metric onto $U_G/U_H$. In the case when $(X,\dist,\mathfrak{m})$ is an RCD$(K,N)$ space it is a priori not clear, whether the distance $\dist$ on $X$ is induced by some Riemannian metric. However, any RCD\((K,N)\) space admits a canonical almost everywhere defined Riemannian metric. Thus our task is to describe a way on how to construct a suitable pullback metric and to show that the induced metric makes our map $h$ into an isometry. 
{\color{blue}
 \subsection{Structure of the paper}
The paper is organized as follows. In the next section, Section \ref{secpre}, we recall fundamental notions/results on metric measure spaces, including a brief introduction on 
 $\RCD$ spaces. Section \ref{secprop} is devoted to the proof of Proposition \ref{Lie localization}. In Section \ref{secnon}, we show the implication from a local homogeneity to the non-collapsed condition, which allows us to use fine topological properties on non-collapsed $\RCD$ spaces. This is necessary to run similar arguments as done in \cite{LN}. We recall known results about Lebesgue points of certain functions on an $\RCD$ space in Section \ref{secleb}, which will be used to discuss blow-up behaviors of a map from an RCD space into a manifold in Seciton \ref{deriv}. Finally, in the final section, Section \ref{secfinal}, the proof of Theorem \ref{LHRCD} is completed.}
 \section{Preliminaries on metric measure spaces}\label{secpre}
In this section we collect the basic notions, definitions and known results about RCD$(K,N)$ spaces. We start with the definitions of local homogeneity. 
\subsection{Local homogeneity}
The following is a fundamental notion in the paper.


\begin{definition}[Local homogeneity as metric space]  \label{def: local homog}  
A metric space $(X, \dist)$ is called \textit{locally (metric) homogeneous} if for every $x,y \in X$ there exist $r>0$ and
	an isometry $f:B_r(x) \rightarrow B_r(y)$ satisfying $f(x)=y$. We call such a map $f$ a pointed isometry.
\end{definition}

We now introduce the notions necessary for RCD$(K,N)$ spaces and its local homogeneity. 

\begin{definition}[Metric measure space]\label{defmm} We call a triple $(X,\dist,\mathfrak{m})$ a \textit{metric measure space} if the pair $(X,\dist)$ is a complete separable metric space and the measure $\mathfrak{m}$ is a Borel measure on $X$ with full support and finite on each bounded set. 
\end{definition}

\begin{definition}[Locally homogeneity as metric measure space] \label{def: locally measure homogeneous}
We say that a metric measure space $(X, \dist, \meas)$ is \textit{locally metric measure homogeneous} if for all $x, y \in X$ there exist $r>0$ and an isometry $\Phi:B_r(x) \to B_r(y)$ as metric spaces with $\Phi(x)=y$ such that $\Phi_{\sharp}(\meas \res_{B_r(x)})=\meas \res_{B_r(y)}$ holds, where $\Phi_{\sharp}$ denotes the push-forward.
\end{definition}
For a metric measure space, the local metric measure homogeneity, by definition, implies the metric one, but it is trivial that the converse implication is not true in general. However the converse one is true if the reference measure is a Hausdorff measure, see for instance (\ref{liphaus}).


\subsection{$\RCD$ spaces}
Fix a metric measure space $(X, \dist, \meas)$ {\color{blue}(recall Definition \ref{defmm}).}
\begin{definition}[Sobolev space]
    The \textit{Cheeger energy} $\Ch:L^2(X, \meas) \to [0, \infty]$ is defined by
\begin{equation}\label{cheegert}
\Ch(f)=\inf_{\{f_i\}_i}\left\{ \liminf_{i\to \infty}\frac{1}{2}\int_X\left(\mathrm{Lip}f_i(x)\right)^2\di \meas(x) \right\},
\end{equation}
where the infimum $\{f_i\}_i$ runs over all bounded Lipschitz $L^2$-functions with  $\|f_i - f\|_{L^2} \to 0$ as $i \to \infty$, and $\mathrm{Lip}f(x)$ denotes the local slope of $f$ at $x$ defined by:
\begin{equation*}
\mathrm{Lip}f(x):=\limsup_{y \to x}\frac{|f(x)-f(y)|}{\dist(x, y)}.
\end{equation*}
The \textit{Sobolev space} $H^{1,2}=H^{1,2}(X, \dist, \meas)$ is defined by the finiteness domain of $\Ch$ and then it is a Banach space equipped with the norm $\|f\|_{H^{1,2}}:=(\|f\|_{L^2}^2+2\Ch(f))^{\frac{1}{2}}$. 
\end{definition}

For any $f \in H^{1,2}$, considering the collection $R(f)$ of all functions in $L^2(X, \meas)$ larger than or equal to a weak $L^2$-limit of $\mathrm{Lip}f_i(x)$ appeared in (\ref{cheegert}), we know that $R(f)$ is a closed convex non-empty subset in $L^2(X, \meas)$. Denoting by $|\nabla f|$ the unique element of $R(f)$ with the smallest $L^2$-norm, called the \textit{minimal relaxed slope} of $f$, we have

\begin{equation*}
\Ch(f)=\frac{1}{2}\int_X|\nabla f|^2\di \meas.
\end{equation*}
The following notion allows us to discuss a canonical Riemannian metric in a weak sense.
\begin{definition}[Infinitesimally Hilbertianity]
    $(X, \dist, \meas)$ is called \textit{infinitesimally Hilbertian} (IH) if the Sobolev space $H^{1,2}$ is a Hilbert space. If it is IH, then for all $f, h \in H^{1,2}$,
\begin{equation*}
\langle \nabla f, \nabla h\rangle := \lim_{t \to 0}\frac{|\nabla (f+th)|^2-|\nabla f|^2}{2t}
\end{equation*}
determines an $L^1$-function on $X$. 
\end{definition}
Then we can define the \textit{Laplacian} on $(X, \dist, \meas)$ from the point of view of Dirichlet form theory as follows.
\begin{definition}[Laplacian]
    Assume that $(X, \dist, \meas)$ is IH.
We denote by $D(\Delta)$ the domain of the Laplacian, namely, $f \in D(\Delta)$ holds if and only if $f \in H^{1,2}$ and there exists a unique $\phi \in L^2(X, \meas)$, denoted by $\Delta f$, such that
\begin{equation*}
\int_X\langle \nabla f, \nabla h\rangle \di \meas=-\int_X\phi h\di \meas,\quad \text{for any $h \in H^{1,2}$.}
\end{equation*}
\end{definition}
We are now in a good position to define $\RCD(K, N)$ spaces.
\begin{definition}[RCD space]
$(X, \dist, \meas)$ is said to be an \textit{$\RCD(K, N)$ space} (or $\RCD$ \textit{space}, for short) for some $K \in \mathbb{R}$ and some $N \in [1, \infty)$ if the following four conditions are satisfied:
\begin{enumerate}
\item \textcolor{forestgreen}{I}t is IH:
\item \textcolor{forestgreen}{T}here exist a positive constant $C>1$ and a point $x \in X$ such that 
\begin{equation*}
\meas(B_r(x))\le C\exp (Cr^2),\quad \text{for any $r>1$:}
\end{equation*}
\item \textcolor{forestgreen}{I}f a Sobolev function $f \in H^{1,2}$ satisfies $|\nabla f|(x) \le 1$ for $\meas${\color{blue}-}a.e. $x \in X$, then $f$ has a $1$-Lipschitz representative:
\item \textcolor{forestgreen}{W}e have the \textit{Bochner inequality}:
\begin{equation}\label{s8sabasy}
\frac{1}{2}\int_X|\nabla f|^2 \cdot \Delta \phi \di \meas \ge \int_X\phi \left( \frac{(\Delta f)^2}{N} +\langle \nabla \Delta f, \nabla f\rangle +K|\nabla f|^2 \right)\di \meas
\end{equation}
for all $\phi \in L^{\infty}(X, \meas) \cap D(\Delta)$ with $\phi \ge 0$ and $\Delta \phi \in L^{\infty}(X, \meas)$, and $f \in D(\Delta)$ with $\Delta f \in H^{1,2}$.
\end{enumerate}
\end{definition}
See \cite{A, STURM} for nice surveys about $\RCD$ spaces, including the history.

Let us end this subsection by {\color{blue}recalling} a special subclass of $\RCD(K, N)$ spaces introduced in \cite{DG}.
\begin{definition}[Non-collapsed $\RCD$ space]
An $\RCD(K, N)$ space $(X, \dist, \meas)$ for some $K \in \mathbb{R}$ and some $N \in [1, \infty)$ is said to be \textit{non-collapsed} if $\meas$ coincides with the Hausdorff measure $\haus^N$ of dimension $N$.
\end{definition}
{\color{blue}There exist fine properties for non-collapsed RCD spaces, including a fact that $N$ coincides with the essential dimension (thus it must be an integer), and the following \textit{Reifenberg flatness}. We refer \cite{DG, KM} for the proofs.
\begin{theorem}[Reifenberg flatness]\label{thm:reifenberg}
For all $K\in \mathbb{R}, N\in \mathbb{N}$ and $\epsilon>0$ there exist $\delta=\delta(K, N, \epsilon)>0$ such that if a non-collapsed $\RCD(K, N)$ space $(X, \dist, \haus^N)$ satisfies
$$\dist_{\mathrm{pGH}}((B_{r}(p) ,p), (B_{r}(0_N),0_N)) < \delta \cdot r$$
for some $0<r<1$ and some $p \in X$, where $0_N$ is the origin of $\mathbb{R}^N$,
then 
$$
    \dist_{\mathrm{pGH}}((B_{s}(p) ,p), (B_{s}(0_n),0_n)) < \epsilon \cdot s
    $$
for all $0<s \le \frac{r}{4}$ and $q \in B_{\frac{r}{2}}(p)$.
\end{theorem}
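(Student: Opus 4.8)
The plan is to translate the Gromov--Hausdorff hypothesis into an assertion about normalized volumes, to propagate it by Bishop--Gromov monotonicity, and to treat the genuinely local part by a compactness argument. Throughout write $\theta_x(t):=\haus^N(B_t(x))/(\omega_N t^N)$ for the normalized volume ratio, $\omega_N$ being the volume of the Euclidean unit ball. The central input is the volume almost-rigidity for non-collapsed $\RCD(K,N)$ spaces of De Philippis--Gigli \cite{DG}: there is a modulus $\Psi(\,\cdot\mid K,N)$ with $\Psi(\eta)\to 0$ as $\eta\to 0$ such that for $0<t<1$,
\begin{equation*}
\dist_{\mathrm{pGH}}\big((B_t(x),x),(B_t(0_N),0_N)\big)<\eta\, t \ \Longrightarrow\ \theta_x(t)>1-\Psi(\eta),
\end{equation*}
and, conversely, $\theta_x(t)>1-\eta$ forces the corresponding ball to be $\Psi(\eta)\,t$-close to $B_t(0_N)$. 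Both implications rest on the continuity of $\haus^N$ under pointed measured Gromov--Hausdorff convergence of non-collapsed spaces \cite{DG}. In this language the theorem becomes the statement that the bound $\theta\approx 1$ at scale $r$ and centre $p$ spreads to all smaller scales and all nearby centres.

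For the fixed centre $p$ this is immediate: by Bishop--Gromov the quotient $\haus^N(B_t(p))/V_{K,N}(t)$ is non-increasing, where $V_{K,N}(t)$ is the volume of the radius-$t$ ball in the $N$-dimensional model of curvature $K$, and $V_{K,N}(t)=\omega_N t^N(1+O(|K|t^2))$ for $t<1$. Hence the hypothesis $\theta_p(r)>1-\Psi(\delta)$, together with the universal upper bound $\theta_p(t)\le 1+O(|K|t^2)$, pins $\theta_p(t)$ near $1$ for every $t\le r$, and the converse of the almost-rigidity returns the desired closeness of $(B_t(p),p)$. The delicate point is the passage to a nearby centre $q\in B_{r/2}(p)$ at a comparable scale $s\le r/4$: comparing $B_s(q)$ with a ball of radius $\approx 3r/2$ about $q$ through Bishop--Gromov loses a definite factor (of order $(2/3)^N$), so monotonicity by itself does not give $\theta_q(s)\approx 1$.

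I would therefore prove the off-centre statement by contradiction and compactness, which is the heart of the matter. If it failed for some $\epsilon>0$, there would be non-collapsed $\RCD(K,N)$ spaces $X_i$, points $p_i$, radii $r_i\in(0,1)$ with $\dist_{\mathrm{pGH}}((B_{r_i}(p_i),p_i),(B_{r_i}(0_N),0_N))<i^{-1}r_i$, together with $q_i\in B_{r_i/2}(p_i)$ and $s_i\le r_i/4$ such that $\dist_{\mathrm{pGH}}((B_{s_i}(q_i),q_i),(B_{s_i}(0_N),0_N))\ge \epsilon\, s_i$. Rescaling the metric by $s_i^{-1}$ produces non-collapsed $\RCD(s_i^2K,N)$ spaces in which $B_{\rho_i}(p_i)$, with $\rho_i:=r_i/s_i\ge 4$, is $i^{-1}\rho_i$-close to the Euclidean ball, while $d(p_i,q_i)<\rho_i/2$ and the unit ball about $q_i$ stays $\epsilon$-far from $B_1(0_N)$. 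Bishop--Gromov gives a uniform lower bound $\haus^N(B_1(q_i))\ge v>0$, so by Gromov precompactness and the stability of non-collapsedness under volume convergence \cite{DG} one may extract (after a subsequence with $\rho_i\to\rho_\infty\in[4,\infty]$) a pointed limit $(Y,q_\infty)$, a non-collapsed $\RCD(K',N)$ space with $K'=\lim_i s_i^2K$. As the large balls converge in the honest metric sense, the ball of radius $\rho_\infty$ about $p_\infty$ in $Y$ (or, if $\rho_i\to\infty$, every bounded ball) is isometric as a metric measure space to the corresponding Euclidean ball, with $\haus^N$ equal to the Lebesgue measure; since $q_\infty$ lies well inside this flat region, $B_1(q_\infty)$ is a genuine Euclidean unit ball, and volume continuity forces $\dist_{\mathrm{pGH}}((B_1(q_i),q_i),(B_1(0_N),0_N))\to 0$, contradicting the lower bound $\epsilon$.

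The main obstacle is precisely this comparable-scale, off-centre propagation. Bishop--Gromov monotonicity only controls a single centre, and upgrading ``almost-maximal volume of the large ball'' to ``almost-maximal volume of every interior sub-ball'' --- equivalently, excluding concentration of $\haus^N$ away from $q$ --- is what forces one to invoke the full strength of the volume-continuity and non-collapse-stability theory of \cite{DG} (see also \cite{KM}); everything else reduces to Bishop--Gromov bookkeeping.
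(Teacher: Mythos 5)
You should first note that the paper does not prove this statement at all: it is quoted from the literature (``We refer \cite{DG, KM} for the proofs''), and the proofs there, following \cite[Appendix A.1]{CheegerColding1}, are genuinely \emph{iterative} Reifenberg arguments, where the gain at each step comes from the geometric decay of the scale--normalized curvature $|K|s^2$ as one descends through scales $s=4^{-k}r$. Your proposal tries to replace this iteration by a single contradiction/compactness argument, and that is exactly where it breaks. In your blow-up you rescale by $s_i^{-1}$ and the hypothesis becomes ``$B_{\rho_i}(p_i)$ is $i^{-1}\rho_i$-close to $B_{\rho_i}(0_N)$'' with $\rho_i=r_i/s_i\ge 4$. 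If $\rho_i$ stays bounded this does yield a flat limit and a contradiction; but nothing prevents $s_i/r_i\to 0$, i.e.\ $\rho_i\to\infty$, and then $i^{-1}\rho_i$ need not tend to $0$, so the assertion that ``every bounded ball'' of the limit is Euclidean is unjustified -- restricting a $\eta\rho$-GH approximation of $B_\rho$ to a unit sub-ball only gives closeness $\eta\rho$, not $\eta$. Rescaling by $r_i^{-1}$ instead does not help: the failure then occurs at scales $\sigma_i=s_i/r_i\to 0$, which is invisible to the pointed GH limit (a GH approximation of accuracy $\eta_i$ says nothing about balls of radius $\sigma_i$ once $\eta_i/\sigma_i\not\to 0$). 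This degenerating-scale, off-centre case is not a technicality; it is the entire content of the theorem, and handling it is precisely what forces the inductive structure (one proves the statement for the single scale ratio $s=r/4$ by compactness, then shows the conclusion re-enters the hypothesis at the smaller scale with a non-deteriorating constant because $|K|(r/4)^2<|K|r^2$, and iterates).

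There are also two quantitative slips in the ``bookkeeping'' part. First, the asserted equivalence between $\theta_x(t)>1-\eta$ and $\Psi(\eta)t$-closeness to $B_t(0_N)$, with a modulus depending only on $(K,N)$ uniformly over all $t<1$, is false in the direction ``volume $\Rightarrow$ GH'' unless $|K|t^2$ is small: a thin collar around a short closed geodesic in a hyperbolic surface of curvature $-\kappa$ is a non-collapsed $\RCD(-\kappa,2)$ space in which $\haus^2(B_1(x))$ can be tuned to equal $\pi$ while $B_1(x)$ is GH-close to a segment. The almost volume rigidity of \cite{DG} requires the normalized curvature to be small, which is exactly why the iteration is needed. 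Second, the ``immediate'' same-centre step loses the factor $V_{K,N}(r)/(\omega_N r^N)=1+O(|K|r^2)$ in Bishop--Gromov; for $K<0$ large and $r$ close to $1$ this is not close to $1$, so $\theta_p(r)\approx 1$ does not by itself pin $\theta_p(t)$ near $1$ for $t\ll r$. Both defects are repaired in the standard proof by first descending to a scale where $|K|s^2$ is small (absorbing the resulting loss into $\delta(K,N,\epsilon)$) and then running the iteration; as written, your argument does not establish the theorem.
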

}

\subsection{Fine structure on $\RCD$ space}\label{subfine}
Let us fix an $\RCD(K, N)$ space $(X, \dist, \meas)$ for some $K \in \mathbb{R}$ and some $N \in [1, \infty)$. {\color{blue}It follows from \cite[Corollary 2.4]{zbMATH05049052} that $X$ is proper, namely any bounded closed subset is compact. Further fine structures are described as follows.}
\begin{theorem}[{\cite[Corollary 1.4]{zbMATH06658550}}]\label{Theorem: RCD is intrinsic} $(X, \dist)$ is a geodesic space. Moreover
    for every \(x \in X\) the following holds: for \(\meas \)-a.e. \(y \in X\) there is only one geodesic connecting \(y\) to \(x\).

\end{theorem}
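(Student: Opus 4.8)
The plan is to treat the two assertions separately, reducing the deeper one — almost everywhere uniqueness of geodesics — to the uniqueness of optimal transport from an absolutely continuous source. That $(X,\dist)$ is geodesic I expect to be routine: it is a standard structural fact that any $\RCD(K,N)$ space is a length space (one way to see this is through the equivalence of the Bochner formulation used above with the Lagrangian curvature-dimension condition, whose $W_2$-geodesics between Dirac-approximating measures produce midpoints in $X$). Since $X$ is proper — already recorded above via \cite[Corollary 2.4]{zbMATH05049052}, hence complete and locally compact — a proper length space is geodesic by the Hopf--Rinow theorem, which gives the first claim.

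\emph{For the almost everywhere uniqueness}, the two key inputs I would invoke are: (i) every finite-dimensional $\RCD$ space is \emph{essentially non-branching}; and (ii) for an absolutely continuous source $\mu_0 \ll \meas$ and an arbitrary target $\mu_1 \in \mathcal{P}_2(X)$, the optimal dynamical plan — the lift of the $W_2$-geodesic to a measure on the space $\mathrm{Geo}(X)$ of constant-speed geodesics — is \emph{unique} and induced by a map. Granting these, fix $x \in X$ and apply (ii) with the singular target $\mu_1 = \delta_x$ and the absolutely continuous probability measure $\mu_0 = \meas \res_{B_R(x)}/\meas(B_R(x))$. The only coupling is the forced one $\mu_0 \otimes \delta_x$, so the associated optimal dynamical plan $\Pi$ — a measure concentrated on geodesics all terminating at $x$ — is unique.

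Now I would argue by contradiction. Suppose the set $A := \{\, y \in X : \text{at least two geodesics join } y \text{ to } x \,\}$ had $\meas(A)>0$; then $\meas(A \cap B_R(x))>0$ for $R$ large. The multifunction sending $y$ to the set of geodesics from $y$ to $x$ has closed values in the Polish space $\mathrm{Geo}(X)$, so a measurable selection theorem produces two measurable selections $y \mapsto \gamma^1_y$ and $y \mapsto \gamma^2_y$ that differ on $A \cap B_R(x)$. Each $\gamma^i_y$ realizes $\dist(y,x)$, so the push-forwards $\Pi^i := \int \delta_{\gamma^i_y}\,\di\mu_0(y)$ are both optimal dynamical plans with the forced coupling, yet $\Pi^1 \neq \Pi^2$ — contradicting the uniqueness in (ii). Hence $\meas(A)=0$, which is precisely the assertion that for $\meas$-a.e.\ $y$ a single geodesic joins $y$ to $x$; letting $R \to \infty$ removes the auxiliary radius.

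\emph{The main obstacle} is input (ii): the uniqueness, and map structure, of optimal transport from an absolutely continuous source to a possibly singular target. This is the genuinely dimension-dependent statement, resting on essential non-branching together with absolute continuity and density bounds for the intermediate measures along $W_2$-geodesics; proving it from the Bochner inequality alone would amount to redeveloping the optimal-transport structure theory of finite-dimensional $\RCD$ spaces. Everything else is a clean measurable-selection reduction.
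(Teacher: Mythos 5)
The paper offers no proof of this statement: it is quoted directly from \cite[Corollary 1.4]{zbMATH06658550}, and your argument --- Hopf--Rinow for the geodesic property, then reducing the $\meas$-a.e.\ uniqueness to the uniqueness of the optimal dynamical plan from $\meas\res_{B_R(x)}/\meas(B_R(x))$ to the singular target $\delta_x$, via essential non-branching and a measurable-selection contradiction --- is essentially the route taken in that reference. Your reduction is correct, and you rightly flag input (ii) as the genuinely hard step; it is precisely the main theorem of the cited paper, so nothing further is missing.
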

In the sequel, we assume that $X$ is not a single point.
\begin{definition}[Tangent cone]
A pointed metric measure space $(Y, \dist_Y, \meas_Y, y)$ is said to be a \textit{tangent cone} at $x \in X$ of $(X, \dist, \meas)$ if there exists a sequence $r_i \to 0^+$ such that 
\begin{equation*}
\left(X, \frac{1}{r_i}\dist, \frac{1}{\meas(B_{r_i}(x))}\meas, x\right) \stackrel{\mathrm{pmGH}}{\to} (Y, \dist_Y, \meas_Y, y),
\end{equation*}
where ``pmGH'' stands for the pointed measured Gromov-Hausdorff convergence. Denote by $\text{Tan}(X, \dist, \meas, x)$ the set of all isometry classes of tangent cones at $x$ of $(X, \dist, \meas)$. 
\end{definition}
Let us recall a fundamental notion to study $\RCD$ spaces, \textit{regular sets}.
\begin{definition}[Regular set]
A point $x \in X$ is said to be \textit{$k$-dimensional regular} if 
\begin{equation*}
\text{Tan}(X, \dist, \meas, x) =\left\{ \left(\mathbb{R}^k, \dist_{\mathrm{Euc}}, \frac{1}{\omega_k}\haus^k, 0_k\right)\right\},
\end{equation*}
where $\dist_{\mathrm{Euc}}$ denotes the standard Euclidean distance, and $\omega_k$ is the volume of a ball of radius $1$ in $\mathbb{R}^k$.
Denote by $\mathcal{R}_k$ the set of all $k$-dimensional regular points of $(X, \dist, \meas)$.
\end{definition}
We will also use the following results (see also \cite[Theorem 6.2]{deng2020holder}).
\begin{theorem}[{\cite[Theorem 0.1]{BrueSemola}}]\label{Deng theorem regular set}
    There exists a unique \(n \in \mathbb{N}\) with  \(1 \leq n \leq N\) such that \(\meas(X \setminus \mathcal{R}_n) = 0\).
We call $n$ the \textit{essential dimension} of $(X, \dist, \meas)$.
\end{theorem}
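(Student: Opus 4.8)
The plan is to follow the argument of Brué and Semola, which splits into a previously known existence part and the genuinely new constancy part. First I would invoke the rectifiability theory of Mondino and Naber: for an $\RCD(K,N)$ space one has $\meas\bigl(X \setminus \bigcup_{k=1}^{N} \mathcal{R}_k\bigr)=0$, and each regular set $\mathcal{R}_k$ is $(\meas,k)$-rectifiable. This immediately reduces the statement to showing that at most one index $k$ satisfies $\meas(\mathcal{R}_k)>0$; uniqueness of $n$ is then automatic. The bounds $1\le n\le N$ follow from general structure theory: every tangent cone is Euclidean of dimension at most $N$, so $\mathcal{R}_k=\emptyset$ for $k>N$, while $X$ being geodesic and not a single point forces $n\ge 1$.

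To access the constancy I would pass to Gigli's tangent module $L^2(TX)$, which carries a canonical dimensional decomposition $X=\bigsqcup_k X_k$ into Borel sets on which the $L^{\infty}$-module $L^2(TX)$ has local (pointwise) dimension $k$. On $\mathcal{R}_k$ this local dimension equals $k$, so $X_k$ coincides with $\mathcal{R}_k$ up to $\meas$-null sets. Hence it suffices to prove that the dimensional decomposition of $L^2(TX)$ has a single cell of positive measure. The heart of the proof is then a transport argument via regular Lagrangian flows: given a sufficiently regular vector field $b$ (square-integrable covariant derivative, bounded pointwise norm, and bounded divergence), the theory of Ambrosio and Trevisan provides a unique regular Lagrangian flow $(X_t)_t$ which is $\meas$-a.e. invertible with $(X_t)_{\sharp}\meas$ comparable to $\meas$.

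The key step is to show that such flows preserve the module dimension. Concretely, if $f_1,\dots,f_k$ are test functions whose differentials $df_1,\dots,df_k$ are $\meas$-a.e. linearly independent on a positive measure subset of $X_k$, then the pulled-back differentials $d(f_i\circ X_t)$ should remain linearly independent for all $t$. Running the flow forward and applying the same reasoning to the inverse flow generated by $-b$ yields both inequalities between dimensions, so the local dimension is invariant along the flow. Since $X$ is connected and any two regions of positive measure can be joined by the flow of such a vector field, the local dimension is forced to be $\meas$-a.e. constant, which is exactly what we want.

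The hard part will be precisely this preservation of module dimension under the flow. One must make rigorous sense of the differential of the regular Lagrangian flow acting on $L^2(TX)$ and control the evolution of the forms $d(f_i\circ X_t)$; the natural route is to derive a linear continuity-type ODE for these forms along trajectories and to observe that, because the flow is invertible, the induced transformation of the tangent module is an isomorphism $\meas$-a.e., so that nondegeneracy of the Gram matrix $\langle d(f_i\circ X_t),\, d(f_j\circ X_t)\rangle$ cannot be destroyed. Carrying this through requires the full calculus on $\RCD$ spaces — existence of sufficiently many test vector fields, commutation and Bakry–Émery estimates, and the self-improvement of the Bochner inequality (\ref{s8sabasy}) — and this is where the real difficulty of the theorem resides.
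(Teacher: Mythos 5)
The paper does not prove this statement at all: it is imported verbatim as \cite[Theorem 0.1]{BrueSemola}, so there is no internal proof to compare against. Your sketch is, at the level of strategy, a faithful outline of the argument in that reference: reduce via the Mondino--Naber decomposition \cite{MondinoNaber} to showing that at most one $\mathcal{R}_k$ has positive measure, identify $\mathcal{R}_k$ (up to $\meas$-null sets) with the set where the tangent module has local dimension $k$, and propagate this local dimension by the regular Lagrangian flow of a sufficiently regular vector field, using invertibility of the flow to get preservation in both directions.

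Two points in your outline are only gestures at the actual content of the proof, and you should be aware of where the weight lies. First, the mechanism by which the flow preserves dimension is not really an ODE for the forms $d(f_i\circ X_t)$ in the tangent module; the decisive ingredient of \cite{BrueSemola} is a quantitative Sobolev/Lusin--Lipschitz regularity estimate for the regular Lagrangian flow, which shows that for short times the flow maps are, on sets of almost full measure, bi-Lipschitz onto their images. Dimension preservation then follows because a bi-Lipschitz map of Borel pieces preserves $(\meas,k)$-rectifiability and the dimension of tangent cones, so it cannot send a positive-measure portion of $\mathcal{R}_k$ into $\mathcal{R}_j$ with $j\neq k$. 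Deriving that regularity estimate (via Bakry--\'Emery-type interpolation and the self-improved Bochner inequality) is the theorem's real difficulty, as you note, but your proposed route through a ``linear continuity-type ODE for the forms'' is not how it is done and it is not clear it could be made rigorous as stated. Second, the assertion that ``any two regions of positive measure can be joined by the flow of such a vector field'' is itself a nontrivial construction: one must exhibit an admissible vector field (with bounded divergence and $L^2$ symmetric derivative) whose flow carries a positive-measure piece of a neighborhood of $x$ into a neighborhood of $y$; in \cite{BrueSemola} this is done with gradients of carefully chosen test functions approximating distance-type functions, and connectedness of $X$ alone does not supply it. With those two caveats made explicit, the outline is correct.
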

In the sequel, denote by $n$ the essential dimension of $(X, \dist, \meas)$.
Combining Theorem \ref{Deng theorem regular set} with (the proof of) the rectifiability result in \cite[Theorem 1.1]{MondinoNaber} and \cite[Propositions 3.7 and 3.9]{BPS0} (see also \cite{BPS2}), we obtain the following (see also \cite[Theorem 4.1]{AmbrosioHondaTewodrose}).
\begin{theorem}\label{reduced}
    We have the following.
    \begin{enumerate}
        \item Let us denote by $\mathcal{R}_n^*$ the set of all points $x \in \mathcal{R}_n$ satisfying that the finite positive limit 
        \begin{equation}\label{RNRN}
            \lim_{r\to 0^+}\frac{\meas(B_r(x))}{\omega_nr^n} \in (0, \infty)
        \end{equation}
        exists. Then 
        \begin{enumerate}
            \item we have $\meas(X\setminus \mathcal{R}_n^*)=0$:
            \item $\meas\res_{\mathcal{R}_n^*}$ and $\haus^n\res_{\mathcal{R}_n^*}$ are mutually absolutely continuous:
            \item the limit in (\ref{RNRN}) coincides with the Radon-Nikodym derivative: $$\frac{\di \meas\res_{\mathcal{R}_n^*}}{\di \haus^n\res_{\mathcal{R}_n^*}}(x)$$ for $\meas$-a.e. $x \in \mathcal{R}_n^*$.
        \end{enumerate}
        \item For $\meas$-a.e. $x \in \mathcal{R}_n^*$, it holds that for any $\epsilon>0$ there exist $r>0$ and a harmonic map 
        \begin{equation*}
        \Phi=(\phi_1, \phi_2,\ldots, \phi_n):B_r(x) \to \mathbb{R}^n
        \end{equation*}
        such that 
        \begin{equation*}
        \intav_{B_r(x)}\left|\langle \nabla \phi_i, \nabla \phi_j\rangle-\delta_{ij}\right|\di \meas <\epsilon,\quad \text{for all $i, j$,}
        \end{equation*}
        where $\intav_A:=\frac{1}{\meas(A)}\int_A$.
        \item For any $\epsilon>0$ there exists a family $\{(A_i, \Phi_i)\}_{i \in \mathbb{N}}$ of $(1\pm \epsilon)$-bi-Lipschitz embeddings $$\Phi_i:A_i \hookrightarrow \mathbb{R}^n$$ of Borel subsets $A_i$ of $X$ such that each $\Phi_i$ can be obtained as a restriction of a harmonic map defined on a neighborhood of $A_i$ and that $\meas (X\setminus \bigcup_iA_i)=0$.
    \end{enumerate}
\end{theorem}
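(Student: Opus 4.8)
The plan is to assemble the three assertions from structural results already in the literature rather than to argue from scratch, so the proof is essentially a matter of organizing the cited statements and checking their compatibility.

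First I would address (1). By Theorem \ref{Deng theorem regular set} the essential dimension $n$ satisfies $\meas(X \setminus \mathcal{R}_n)=0$, so it suffices to control the density at regular points. The rectifiability theorem \cite[Theorem 1.1]{MondinoNaber} provides a countable bi-Lipschitz decomposition of $\mathcal{R}_n$ into charts in $\mathbb{R}^n$; combined with the absolute continuity $\meas \ll \haus^n$ on $\mathcal{R}_n$ coming from the density estimates in the cited works, this yields that the rescaled measures $\frac{1}{\meas(B_r(x))}\meas\res_{B_r(x)}$ converge to the Euclidean reference measure $\frac{1}{\omega_n}\haus^n$ at $\meas$-a.e. regular point. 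Reading off the volume of balls from this convergence shows that the limit in (\ref{RNRN}) exists and lies in $(0,\infty)$ for $\meas$-a.e. $x$, which is (1a). Since this density is finite and strictly positive almost everywhere, $\meas\res_{\mathcal{R}_n^*}$ and $\haus^n\res_{\mathcal{R}_n^*}$ are mutually absolutely continuous, giving (1b). For (1c) I would invoke the Lebesgue--Besicovitch differentiation theorem, available because $(X,\dist,\meas)$ is locally doubling, together with the fact that the $n$-density of $\haus^n$ equals $1$ at $\haus^n$-a.e. point of an $n$-rectifiable set; this identifies the limit in (\ref{RNRN}) with $\frac{\di \meas\res_{\mathcal{R}_n^*}}{\di \haus^n\res_{\mathcal{R}_n^*}}$.

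Next, (2) and (3) are the harmonic part of the structure theory. For (2) I would, at a regular point $x$, exploit that small balls $B_r(x)$ are pointed Gromov--Hausdorff close to Euclidean balls and solve the appropriate Dirichlet problems to produce harmonic $\delta$-splitting maps $\Phi=(\phi_1,\ldots,\phi_n)$; the almost-splitting estimate $\intav_{B_r(x)}|\langle \nabla\phi_i,\nabla\phi_j\rangle-\delta_{ij}|\di\meas<\epsilon$ is exactly the content of \cite[Propositions 3.7 and 3.9]{BPS0}. For (3), the averaged gradient control is upgraded, via the Hessian bounds for harmonic splitting maps and a maximal-function truncation, to a genuine $(1\pm\epsilon)$-bi-Lipschitz estimate for $\Phi$ on a subset of $B_r(x)$ of almost full measure; exhausting $\mathcal{R}_n^*$ by countably many such subsets $A_i$ carrying harmonic charts $\Phi_i$ and discarding a $\meas$-null set produces the desired family, as in \cite[Theorem 4.1]{AmbrosioHondaTewodrose}.

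The main obstacle is the passage in (3) from the integral orthonormality of the gradients to pointwise metric distortion: the $L^1$-smallness of $|\langle \nabla\phi_i,\nabla\phi_j\rangle-\delta_{ij}|$ does not by itself bound the bi-Lipschitz constant, and one genuinely needs the quantitative splitting machinery (Hessian estimates, $\epsilon$-regularity, and maximal-function arguments) of \cite{BPS0} and \cite{MondinoNaber} to localize to a large good set. Once that quantitative input is granted, the remaining work — choosing radii, extracting the density limit, and assembling the countable cover — is routine bookkeeping.
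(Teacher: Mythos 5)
Your proposal follows essentially the same route as the paper, which states Theorem \ref{reduced} without a detailed argument, precisely as a combination of Theorem \ref{Deng theorem regular set}, the rectifiability result of \cite[Theorem 1.1]{MondinoNaber}, the splitting-map estimates of \cite[Propositions 3.7 and 3.9]{BPS0}, and the density/Radon--Nikodym analysis of \cite[Theorem 4.1]{AmbrosioHondaTewodrose}. Your assembly of these ingredients (constancy of dimension, bi-Lipschitz charts plus Lebesgue differentiation for the density statements, and the quantitative harmonic splitting machinery for the bi-Lipschitz embeddings) is consistent with the intended derivation.
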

See also \cite[Proposition 4.3]{BrueMondinoSemola}.

In order to introduce a geometric property on $\mathcal{R}_n$, Theorem \ref{Deng theorem weakly connected},  let us recall the following notions.
\begin{definition}[\(\epsilon\)-geodesic]
     A unit speed, piecewise geodesic curve \(\gamma\) between \(p \in X\) and \(q\in X\) is called an \(\epsilon\)-\textit{geodesic}  if \(L(\gamma) - \dist(p,q) \leq \frac{\epsilon}{2}\dist (p,q)\), where $L(\gamma)$ denotes the length of \(\gamma\).
\end{definition}
\begin{definition}[Weak convexity]
A subset \(S \subset X\) is called \textit{weakly convex} if  for all \((x, y) \in S \times S\) and \(\epsilon > 0\), there exists an \(\epsilon\)-geodesic \(\gamma \subset S\) connecting \(x\) and \(y\).
\end{definition}
The following result will play a key role to prove Proposition \ref{Lie localization}.
\begin{theorem}[{\cite[Theorem 6.5]{deng2020holder}}] \label{Deng theorem weakly connected}
    \(\mathcal{R}_n\) is weakly convex. In particular, \(\mathcal{R}_n\) is path-connected.
\end{theorem}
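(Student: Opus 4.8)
The plan is to establish weak convexity directly and obtain path-connectedness as an immediate byproduct: an $\epsilon$-geodesic is, by definition, a continuous (piecewise geodesic) curve, so once every pair of points of $\mathcal R_n$ is joined by an $\epsilon$-geodesic whose image lies in $\mathcal R_n$, the set $\mathcal R_n$ is automatically path-connected. Thus I fix $x,y\in\mathcal R_n$ and $\epsilon>0$ and aim to produce a single auxiliary point $z\in\mathcal R_n$, lying near a midpoint of $x$ and $y$, for which the (minimizing) geodesics from $x$ to $z$ and from $z$ to $y$ have their images entirely inside $\mathcal R_n$; concatenating them and reparametrizing by arclength will give the desired $\epsilon$-geodesic.

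The first step is a measure-theoretic one: for \emph{every} $x\in\mathcal R_n$ and $\meas$-a.e.\ $q$, the unique geodesic $\gamma_{xq}$ (unique for a.e.\ $q$ by Theorem \ref{Theorem: RCD is intrinsic}) satisfies $\gamma_{xq}(t)\in\mathcal R_n$ for a.e.\ $t\in(0,1)$. I would prove this by contracting the normalized restriction $\mu_1=\frac{1}{\meas(B)}\meas\res_{B}$ of $\meas$ on a small ball toward $x$ along $W_2$-geodesics: the measure contraction property of $\RCD(K,N)$ spaces guarantees that the intermediate measures $\mu_t$ are absolutely continuous with respect to $\meas$ for $t\in(0,1)$. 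Since $\meas(X\setminus\mathcal R_n)=0$ by Theorem \ref{Deng theorem regular set}, we get $\mu_t(X\setminus\mathcal R_n)=0$, and disintegrating the optimal dynamical plan into geodesics yields the claim.

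The hard part will be upgrading ``$\gamma_{xq}(t)\in\mathcal R_n$ for a.e.\ $t$'' to ``for all $t\in(0,1)$''. Here I would invoke the Hölder continuity of tangent cones along the interior of geodesics from \cite{deng2020holder}: along any fixed geodesic $\gamma$, the map $t\mapsto\operatorname{Tan}(X,\dist,\meas,\gamma(t))$ is continuous on $(0,1)$. Consider the set $T=\{t\in(0,1):\gamma(t)\in\mathcal R_n\}$. Continuity makes $T$ closed, since a limit of tangent cones equal to $\mathbb R^n$ is again $\mathbb R^n$. For openness I would use a rigidity gap — an $\RCD(0,N)$ metric cone that is Gromov--Hausdorff close to $\mathbb R^n$ must coincide with $\mathbb R^n$, which one extracts from the Reifenberg regularity of Theorem \ref{thm:reifenberg} — so that tangent cones staying close to $\mathbb R^n$ near a regular time are forced to equal $\mathbb R^n$. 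Being clopen in the interval $(0,1)$ and nonempty (by the previous step $T$ has full measure in $(0,1)$), $T$ is all of $(0,1)$. Thus the entire open geodesic lies in $\mathcal R_n$; together with $x\in\mathcal R_n$ and $q\in\mathcal R_n$ for a.e.\ $q$, this shows that for a.e.\ $q$ the whole image of $\gamma_{xq}$ is contained in $\mathcal R_n$. I expect the openness (the rigidity gap separating $\mathbb R^n$ from other cones) to be the genuinely delicate point, and it is precisely where the continuity estimate of \cite{deng2020holder} is indispensable.

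It then remains to assemble the $\epsilon$-geodesic. Writing $G_x=\{q\in\mathcal R_n: \gamma_{xq}\text{ has image in }\mathcal R_n\}$ and likewise $G_y$, the previous two steps give $\meas(X\setminus G_x)=\meas(X\setminus G_y)=0$, so $G_x\cap G_y$ has full measure and, since $\meas$ has full support, is dense. Fixing a midpoint $m$ of $x$ and $y$, I choose $z\in G_x\cap G_y$ with $\dist(z,m)<\frac{\epsilon}{4}\dist(x,y)$. Concatenating $\gamma_{xz}$ and $\gamma_{zy}$ produces a piecewise geodesic whose image lies in $\mathcal R_n$ and whose length satisfies
\begin{equation*}
\dist(x,z)+\dist(z,y)\le \dist(x,m)+\dist(m,y)+2\dist(z,m)=\dist(x,y)+2\dist(z,m)\le\Big(1+\tfrac{\epsilon}{2}\Big)\dist(x,y).
\end{equation*}
Reparametrizing by arclength, this is an $\epsilon$-geodesic contained in $\mathcal R_n$ joining $x$ and $y$, which establishes weak convexity; path-connectedness follows at once.
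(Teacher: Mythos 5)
The paper does not actually prove this statement: it is imported verbatim as \cite[Theorem 6.5]{deng2020holder}, so there is no internal proof to compare against. Your sketch is, in substance, a faithful reconstruction of how Deng (following Colding--Naber \cite{CN}) derives it, and its logical skeleton is sound: (i) absolute continuity of the intermediate measures along the $W_2$-geodesic from $\delta_x$ to a normalized ball measure (valid in $\RCD(K,N)$ by the measure contraction/entropy-convexity estimates, together with the a.e.\ uniqueness of geodesics from Theorem \ref{Theorem: RCD is intrinsic} and $\meas(X\setminus\mathcal{R}_n)=0$ from Theorem \ref{Deng theorem regular set}) gives that for a.e.\ $q$ the geodesic $\gamma_{xq}$ meets $\mathcal{R}_n$ at a.e.\ time; (ii) the H\"older continuity of tangent cones on compact subintervals of the interior upgrades this to all interior times; (iii) choosing the auxiliary point $z$ near a midpoint from the full-measure set $G_x\cap G_y$ yields the length bound matching the definition of an $\epsilon$-geodesic. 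Be aware, however, that step (ii) is exactly where all the depth sits: the continuity of tangent cones along geodesic interiors is the main theorem of \cite{deng2020holder} itself, so your argument is not independent of the source being cited. Two points deserve more care than you give them. First, since tangent cones at interior points are not a priori unique, the H\"older estimate compares cones obtained from the \emph{same} sequence of scales at nearby times; the closedness of $T=\{t:\gamma(t)\in\mathcal{R}_n\}$ therefore requires this bookkeeping plus a volume-rigidity step (a tangent cone whose normalized unit ball is measured-GH isometric to $(B_1(0_n),\frac{1}{\omega_n}\haus^n)$ must be $\mathbb{R}^n$), rather than the one-line "a limit of $\mathbb{R}^n$'s is $\mathbb{R}^n$". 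Second, your openness/rigidity-gap discussion is both the vaguest part of the sketch and entirely unnecessary: a relatively closed subset of $(0,1)$ of full Lebesgue measure is already all of $(0,1)$, so you should simply drop the clopen argument and rely on closedness plus step (i).
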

\subsection{Functional analysis with respect to pmGH convergence}
The following stability result for harmonic functions will play a key role to construct an isometry explained in subsection \ref{constiso}. 
See \cite[Theorem 4.4 or Corollary 4.5]{AmbrosioHonda} for the proof and see \cite[Theorem 3.1]{Jiang} for the Lipschitz regularity of harmonic functions.
\begin{theorem}[Stability of harmonic functions]\label{harmonic stab}
Let
\begin{equation*}\label{pmghrcd}
\left( X_i, \dist_i, \meas_i, x_i\right) \stackrel{\mathrm{pmGH}}{\to} \left( X, \dist, \meas, x\right).
\end{equation*}
be a pmGH convergent sequece of pointed $\RCD(K, N)$ spaces for some $K \in \mathbb{R}$ and some $N \in [1, \infty)$.
Take $R>0$ and let $f_i:B_R(x_i) \to \mathbb{R}$ be a harmonic function for any $i<\infty$ with $\sup_i\|f_i\|_{L^1(B_R(x_i))} <\infty$. Then after passing to a subsequence there exists a harmonic function $f:B_R(x) \to \mathbb{R}$ such that $f_i$ locally uniformly converge to $f$ on $B_R(x)$ and that they also $H^{1,2}$-strongly converge to $f$ on $B_r(x)$ for any $r<R$.
\end{theorem}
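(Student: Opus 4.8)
The plan is to combine uniform interior elliptic estimates for harmonic functions with the stability theory of the Cheeger energies under pointed measured Gromov--Hausdorff convergence. First I would fix an extrinsic realization of the convergence: since $\RCD(K,N)$ spaces are proper, the pmGH convergence can be realized inside a single proper metric space $(Z, \dist_Z)$, with isometric embeddings of all the $X_i$ and of $X$ so that the images converge in the pointed Hausdorff sense and $\meas_i$ converges weakly to $\meas$. In this picture one has a well-defined notion of (local) uniform convergence of the $f_i$ to a candidate limit, as well as the weak and strong $L^2$- and $H^{1,2}$-convergence of functions living on the varying spaces.

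The second step is to produce a priori bounds that are uniform in $i$. Because the constants in the (sub-)mean value inequality and in the interior Lipschitz estimate for harmonic functions on an $\RCD(K,N)$ space depend only on $K$, $N$ and the radius, the hypothesis $\sup_i \|f_i\|_{L^1(B_R(x_i))} < \infty$, together with the uniform Bishop--Gromov volume control, yields for every $r<R$ a uniform bound $\sup_i \|f_i\|_{L^\infty(B_r(x_i))} < \infty$ and then a uniform Lipschitz bound $\sup_i \mathrm{Lip}(f_i|_{B_r(x_i)}) < \infty$. Through the embeddings into $Z$ this makes the family $\{f_i\}$ equibounded and equicontinuous on every $B_r(x_i)$, so a Gromov--Hausdorff version of the Arzel\`a--Ascoli theorem furnishes, after passing to a subsequence, a locally Lipschitz limit $f \colon B_R(x) \to \mathbb{R}$ to which the $f_i$ converge locally uniformly.

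It then remains to show that $f$ is harmonic and that the convergence improves to strong $H^{1,2}$ on each $B_r(x)$. For harmonicity I would pass to the limit in the weak formulation $\int \langle \nabla f_i, \nabla \phi_i\rangle \di \meas_i = 0$. Fixing a compactly supported Lipschitz test function $\phi$ on $B_R(x)$, I would choose a recovery sequence $\phi_i \to \phi$ converging strongly in $H^{1,2}$ (these exist by the Mosco convergence of the Cheeger energies). The uniform energy bound, obtained from a Caccioppoli estimate using the $L^\infty$ bounds, guarantees that, up to a subsequence, $\nabla f_i$ converges weakly to $\nabla f$; pairing weak convergence of $\nabla f_i$ against strong convergence of $\nabla \phi_i$ gives $\int \langle \nabla f, \nabla \phi\rangle \di \meas = 0$, so $f$ is harmonic.

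Finally, to upgrade to strong $H^{1,2}$-convergence I would prove convergence of the Dirichlet energies, which together with weak convergence of the gradients is exactly strong convergence. Fixing a cutoff $\eta$ supported in $B_R$ with $\eta \equiv 1$ on $B_r$ and testing the equation for $f_i$ against $\eta^2(f_i-c)$ gives the identity $\int \eta^2 |\nabla f_i|^2 \di \meas_i = -2\int \eta (f_i - c)\langle \nabla f_i, \nabla \eta\rangle \di \meas_i$; the right-hand side pairs the weakly convergent $\nabla f_i$ with $\eta(f_i-c)\nabla\eta$, which converges strongly thanks to the local uniform convergence of $f_i$ and the fixed cutoff, hence it converges to $-2\int \eta(f-c)\langle \nabla f, \nabla \eta\rangle \di \meas = \int \eta^2 |\nabla f|^2 \di \meas$, the last equality being the harmonicity of $f$ tested against $\eta^2(f-c)$. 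This pins the limit of the weighted energies, and since this holds for every such cutoff, taking $\eta\equiv 1$ on $B_r$ yields strong $H^{1,2}$-convergence on $B_r$. The main obstacle is precisely this last analytic passage in the varying-space setting: one must run the recovery-sequence and Caccioppoli/energy-convergence arguments so that all error terms are controlled and converge, which is exactly where the Mosco-convergence and Dirichlet-form stability theory on converging $\RCD$ spaces does the essential work.
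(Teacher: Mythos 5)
Your proposal is correct and follows essentially the same route as the paper, which does not prove this statement itself but cites \cite[Theorem 4.4 or Corollary 4.5]{AmbrosioHonda} together with the Lipschitz regularity of \cite[Theorem 3.1]{Jiang}: uniform interior $L^\infty$ and Lipschitz estimates plus Arzel\`a--Ascoli give the locally uniform limit, and the Mosco/$\Gamma$-convergence of the localized Cheeger energies with a Caccioppoli-type energy identity gives harmonicity of the limit and the upgrade to strong $H^{1,2}$-convergence, exactly as in the cited reference.
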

See \cite{GigliMondinoSavare13, AmbrosioHonda0, AmbrosioHonda} for the definition of $H^{1,2}$-strong convergence and more general results (see also \cite{GIGLI} for a nice survey).

{\color{blue}In connection with the theorem above,} let us end this subsection by {\color{blue}recalling a general result on} the harmonicity of any blow-up limit {\color{blue}(or a kind of differentiability)} of a Sobolev function {\color{blue}as an independent interest (strictly speaking, together with results in Section \ref{secleb}, one of Theorems \ref{harmonic stab}, \ref{Rad} is enough for our purpose. However writing both statements may be useful for readers' understanding).} The proof is based on \cite[Theorem 3.7]{Cheeger}. See \cite[Theorem 5.4]{AHPT} for the details.
\begin{theorem}[Rademacher type differentiability]\label{Rad}
Let $(X, \dist, \meas)$ be an $\RCD(K, N)$ space for some $K\in \mathbb{R}$ and some $N \in [1, \infty)$, let $R>0$, let $x \in X$ and
let $f:B_R(x) \to \mathbb{R}$ be a $H^{1,2}$-Sobolev function. Then for $\meas$-a.e. $z \in B_R(x)\cap \mathcal{R}_n$ we have the following: for any $r>0$, let
\begin{equation}\label{norma}
f_{r, z}:=\frac{1}{r}\left( f-\intav_{B_r(z)}f\di \meas\right).
\end{equation}
Then for any convergent sequence $r_i \to 0^+$, after passing to a subsequence, $f_{r_i, z}$ $H^{1,2}_{\mathrm{loc}}$-strongly converge to a linear function on $\mathbb{R}^n$ with respect to the pmGH convergence:
\begin{equation*}
\left(X, \frac{1}{r_i}\dist, \frac{1}{\meas(B_{r_i}(z))}\meas, z\right) \stackrel{\mathrm{pmGH}}{\to} \left(\mathbb{R}^n, \dist_{\mathrm{Euc}}, \frac{1}{\omega_n}\haus^n, 0_n\right).
\end{equation*}
\end{theorem}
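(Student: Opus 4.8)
The plan is to identify the blow-up limit explicitly as a linear function whose coefficients are the inner products of $\nabla f$ with a system of almost splitting (harmonic) coordinates based at $z$.

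First I would isolate a full-$\meas$-measure set of \emph{good} base points. Using Theorem \ref{reduced}(1) I restrict to $z \in \mathcal{R}_n^*$, so that the rescalings converge in the pmGH sense to $(\mathbb{R}^n, \dist_{\mathrm{Euc}}, \tfrac{1}{\omega_n}\haus^n, 0_n)$ and the normalisation $\meas(B_{r_i}(z))$ behaves like $\omega_n r_i^n$ up to the density. Using the countable family of charts $\{(A_i, \Phi_i)\}$ from Theorem \ref{reduced}(3), for $\meas$-a.e. $z$ I fix a chart $\Phi = (\phi_1, \ldots, \phi_n)$, the restriction of a harmonic map, with $\intav_{B_\rho(z)} |\langle \nabla \phi_j, \nabla \phi_k \rangle - \delta_{jk}| \di \meas$ as small as desired (Theorem \ref{reduced}(2)). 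Since $\meas$ is locally doubling, the Lebesgue differentiation theorem holds, so I may additionally require $z$ to be a Lebesgue point of $|\nabla f|^2$ and of each $g_j := \langle \nabla f, \nabla \phi_j \rangle \in L^1_{\mathrm{loc}}$; a countable union over the charts leaves a set of full measure.

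Next comes the uniform energy estimate and compactness. With the normalisation (\ref{norma}) one has $|\nabla^{r_i} f_{r_i, z}| = |\nabla f|$ for the gradient computed in the rescaled metric $\tfrac{1}{r_i}\dist$, so the rescaled Dirichlet energy over the unit ball equals $\intav_{B_{r_i}(z)} |\nabla f|^2 \di \meas$, which converges to $|\nabla f|^2(z)$ by the Lebesgue point property and is therefore bounded. Since $f_{r_i, z}$ has zero mean on the unit ball by construction, the $(1,2)$-Poincar\'e inequality valid on $\RCD(K,N)$ spaces bounds its $L^2$ norm as well. Hence $\{f_{r_i, z}\}$ is bounded in $H^{1,2}$ uniformly along the blow-up, and by the stability theory for Sobolev functions under pmGH convergence I extract a subsequence converging $H^{1,2}_{\mathrm{loc}}$-weakly to a Sobolev function $f_\infty$ on $\mathbb{R}^n$ with $f_\infty(0_n) = 0$; lower semicontinuity of the Cheeger energy bounds the limiting energy by $|\nabla f|^2(z)$.

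It remains to identify $f_\infty$ and upgrade the convergence. The correspondingly normalised coordinates $\phi_{j, r_i, z}$ are harmonic on balls of radius $\to \infty$ in the rescaled spaces with uniformly bounded energy, so by Theorem \ref{harmonic stab} they converge, up to a further subsequence, to harmonic functions $\psi_j$ on $\mathbb{R}^n$; the almost orthonormality of $\langle \nabla \phi_j, \nabla \phi_k \rangle$ passes to the limit and forces $\psi_j$ to be the standard linear coordinates $x_j$, whose gradients form an orthonormal frame. Granting $H^{1,2}_{\mathrm{loc}}$-strong convergence of both $f_{r_i, z}$ and $\phi_{j, r_i, z}$, the mixed inner products converge, giving $\langle \nabla f_\infty, \nabla x_j \rangle = \lim_i \intav_{B_{r_i}(z)} g_j \di \meas = g_j(z) =: a_j$, constant on $\mathbb{R}^n$ by the Lebesgue point property. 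Since $\{\nabla x_j\}$ is an orthonormal frame and these inner products are the constants $a_j$, the gradient $\nabla f_\infty = \sum_j a_j e_j$ is constant, so $f_\infty = \sum_j a_j x_j$ is linear. Finally, as the chart deviation tends to zero one has $\sum_j a_j^2 \to |\nabla f|^2(z)$, matching the rescaled energies; this no-energy-loss identity promotes the weak convergence to the asserted $H^{1,2}_{\mathrm{loc}}$-strong convergence.

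The main obstacle I expect is precisely this last step: establishing the \emph{joint} strong convergence of $f_{r_i, z}$ together with the coordinate maps and the resulting energy identity. The weak compactness and lower semicontinuity are routine within the Mosco/stability framework, but extracting the exact value of the limiting gradient from the mixed term $\langle \nabla f, \nabla \phi_j \rangle$ and closing the energy balance require the almost-splitting estimates at regular points and the Lebesgue point control to be used in tandem, which is where the argument of \cite[Theorem 3.7]{Cheeger} and its $\RCD$ adaptation in \cite[Theorem 5.4]{AHPT} do the real work.
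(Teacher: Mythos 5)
The paper does not prove Theorem \ref{Rad} itself but defers to \cite[Theorem 5.4]{AHPT} (which is based on \cite[Theorem 3.7]{Cheeger}), and your argument is essentially a faithful reconstruction of that proof: blow-up compactness from the Lebesgue-point energy bound, identification of the limit via strongly convergent harmonic almost-splitting coordinates, and the no-energy-loss identity to force linearity and upgrade weak to strong $H^{1,2}_{\mathrm{loc}}$ convergence. The only place where you are slightly too quick is in asserting that $\langle \nabla f_\infty,\nabla x_j\rangle$ is constant ``by the Lebesgue point property'' before the energy balance has been invoked --- as written you only control its averages over balls centred at $0_n$ --- but this is repaired either by using non-centred Lebesgue points (legitimate since $\meas$ is locally doubling) or, as in the cited proof, by the equality case in Jensen's inequality coming from the very energy identity you correctly flag as the crux.
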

\begin{remark}\label{Radrem}
In Theorem \ref{Rad}, if $f$ is locally Lipschitz, then the normalization (\ref{norma}) can be replaced by
\begin{equation*}
    \frac{1}{r}\left(f-f(z)\right)
\end{equation*}
in order to get the same conclusion.
\end{remark}
\section{Proof of Proposition \ref{Lie localization}}\label{secprop} 
In this section let us prove Proposition \ref{Lie localization}. {\color{blue}Note that this general statement is a direct consequence of the proof of \cite[Theorem 1.14]{CN} together with that of \cite[Theorem 6.2]{deng2020holder} (see the end of this section).
Since it is enough for our purpose to prove the proposition for non-collapsed $\RCD$ spaces because of Lemma \ref{top}, firstly let us provide a self-contained proof of it specializing in the non-collapsed case for readers' convenience.} For this, we slightly modify the proof of \cite[Theorem 4.5]{zbMATH01782634} with the use of Theorems \ref{Deng theorem regular set} and \ref{Deng theorem weakly connected}. Let us first recover the statements in \cite[Theorem A]{zbMATH07027835} and \cite[Corollary 1.2]{zbMATH06920925}. For the readability we restate the result.

\begin{theorem}[Isometry group of RCD\((K,N)\) space is a Lie group]
      Let \((X, \dist, \mathfrak{m})\) be an $\RCD(K,N)$ space for some \(K \in \mathbb{R}\) and some \(N \in [1, \infty)\). Then the isometry group $\operatorname{Isom}(X)$ of $(X, \dist)$ is a Lie group.
\end{theorem}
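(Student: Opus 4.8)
The plan is to equip $G:=\operatorname{Isom}(X)$ with the compact--open topology and to reduce the statement to the Gleason--Montgomery--Zippin solution of Hilbert's fifth problem, in the form: \emph{a locally compact topological group with no small subgroups is a Lie group}. First I would record that $X$ is proper (noted at the start of Section~\ref{subfine}), so that every element of $G$ is $1$-Lipschitz and the family $G$ is equi-Lipschitz; by Arzel\`a--Ascoli this makes $(G,\text{compact--open})$ a locally compact, second countable topological group acting continuously and properly on $X$. In particular the isotropy group $G_x:=\{g\in G:\ g(x)=x\}$ is compact for every $x$, and all that remains is to prove that $G$ has no small subgroups (NSS).

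The key geometric input is the linearization of the isotropy action at a regular point. Fix $x\in\mathcal{R}_n$, which is nonempty by Theorem~\ref{Deng theorem regular set} and $G$-invariant since isometries preserve tangent cones. For $g\in G_x$ the rescalings of $g$ on $(X,r^{-1}\dist,x)$ converge, along any blow-up sequence, to an isometry of the tangent cone $(\mathbb{R}^n,\dist_{\mathrm{Euc}},0_n)$ fixing the origin; this produces a continuous homomorphism $\rho\colon G_x\to O(n)$, $g\mapsto D_xg$.

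The crux is to show that $\rho$ is injective, i.e. that an isometry $g$ fixing $x$ with $D_xg=\mathrm{id}$ must be the identity. I would argue that such a $g$ acts trivially on the tangent cone, hence, through the near-isometric harmonic charts of Theorem~\ref{reduced}(2) together with the almost-rigidity encoded in Reifenberg flatness (Theorem~\ref{thm:reifenberg}) in the non-collapsed reduction, that $g$ fixes $x$ to first order along the full-measure regular set; since $g$ preserves geodesics and, by Theorem~\ref{Theorem: RCD is intrinsic}, $\meas$-a.e.\ point is joined to $x$ by a \emph{unique} geodesic whose initial direction is then fixed, $g$ fixes a set of full measure and therefore, being continuous, $g=\mathrm{id}$. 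The weak convexity and path-connectedness of $\mathcal{R}_n$ (Theorem~\ref{Deng theorem weakly connected}) are what let this local rigidity propagate globally. Granting injectivity, $G_x$ embeds as a closed subgroup of $O(n)$, hence is a compact Lie group and in particular has NSS.

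It remains to upgrade NSS of the isotropy to NSS of $G$, and this is where I expect the main difficulty. Were $G$ not NSS, the Gleason--Yamabe structure theory would yield a nontrivial $p$-adic subgroup $\mathbb{Z}_p\le G$ acting effectively on $X$; since the action restricts to bi-Lipschitz maps on the charts of $\mathcal{R}_n$ furnished by Theorem~\ref{reduced}(3), this would contradict the Hilbert--Smith phenomenon in the Lipschitz category. Combined with $G_x$ being a Lie group and with the orbit map $G/G_x\to X$ being a homeomorphism onto a neighborhood of $x$, this forces $G$ to be NSS, and hence a Lie group. The genuinely delicate points are the injectivity of $\rho$ (the synthetic replacement for the smooth fact that an isometry fixing a point with identity differential is trivial) and the exclusion of effective $p$-adic actions; both are handled by importing, and slightly adapting, the arguments of \cite{zbMATH01782634}, \cite{CN} and \cite{deng2020holder}.
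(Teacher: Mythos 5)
Your target---showing $\operatorname{Isom}(X)$ has no small subgroups---is the same as the paper's, but your route to it is different and contains genuine gaps at exactly the points you flag as delicate. First, the linearization $\rho\colon G_x\to O(n)$, $g\mapsto D_xg$, is not known to be well defined at an arbitrary regular point: the blow-ups of a fixed isometry $g$ along different scales $r_i\to 0^+$ could a priori converge to different elements of $O(n)$. The machinery for uniqueness of blow-ups of Lipschitz maps (Section \ref{deriv}, via harmonic almost-splitting maps and Lebesgue-point arguments) only works for $\meas$-a.e.\ $x$, and for an uncountable isotropy group one cannot pick a single $x$ that is simultaneously a Lebesgue point for every $g\in G_x$; so even the homomorphism property of $\rho$ is unproved. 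Second, your injectivity argument (``$g$ fixes the initial direction of the unique geodesic from $x$ to a.e.\ $y$, hence fixes $y$'') is circular: in an $\RCD$ space there is no exponential map, and a geodesic is not known to be determined by its initial direction, so ``identity differential at a point implies identity'' is precisely the smooth fact whose synthetic analogue would have to be established. Third, the upgrade from NSS isotropy to NSS of $G$ by excluding effective $\mathbb{Z}_p$-actions cannot be done by restricting to the bi-Lipschitz charts $A_i$ of Theorem \ref{reduced}(3): these are merely Borel sets of positive measure, neither open nor $G$-invariant, so the Lipschitz Hilbert--Smith theorem for manifolds does not apply off the shelf. The references you cite for ``importing'' these steps do not contain them in the form needed.

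The paper's actual proof sidesteps all of this with a quantitative contradiction argument in the style of Cheeger--Colding: assuming small closed subgroups $H_i$ exist, it uses the displacement functions $D_{H_i,r}$, the density and weak convexity of $\mathcal{R}_n$ (Theorems \ref{Deng theorem regular set} and \ref{Deng theorem weakly connected}), Reifenberg flatness (or, in the collapsed case, Deng's H\"older continuity of tangent cones along the interior of geodesics), and the intermediate value theorem to produce points $p_i$ and scales $r_i\to 0^+$ with $D_{H_i,r_i}(p_i)=\tfrac{1}{20}r_i$ while $B_{r_i}(p_i)$ is $\epsilon_ir_i$-close to a Euclidean ball; the equivariant Gromov--Hausdorff limit is then a non-trivial closed subgroup of $\operatorname{Isom}(\mathbb{R}^n)$ with displacement at most $\tfrac{1}{20}$ on $B_1(0_n)$, which does not exist. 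No linearization of individual isometries and no $p$-adic exclusion is required. If you wish to pursue your route, each of the three steps above needs a substantial independent argument.
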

\begin{proof}[Proof {\color{blue}under the non-collapsed condition}]
Firstly let us introduce some notation: 
for a metric space $(Z, \dist)$ and a closed subgroup $H < \operatorname{Isom}(Z)$ of the isometry group, set 
$$ \rho_H(z):= \sup_{h \in H } \dist(z,h(z)), \qquad \qquad D_{H,r}(z) := \sup_{w \in B_{\frac{r}{2}}(z)} \rho_H(w).$$

    In order to prove that $\operatorname{Isom}(X)$ is a Lie group, it is sufficient to show that it contains no small closed subgroups \cite[Corollary 1.5.8]{TAO}. Assume {\color{blue}to} the contrary, that is, there exits a sequence $(H_i)_{i \in \mathbb{N}}$ of non-trivial closed subgroups, which satisfy 
    \begin{equation}\label{D0}
    ´\lim_{i \rightarrow \infty} D_{H_i,r}(p)=0 \qquad \text{ for all } r>0 \text{ and } p \in X.
    \end{equation}
    Denoting by $n$ the essential dimension of $(X, \dist, \meas)$,  we can  obtain a contradiction if we can find sequences of points $(p_i)_{i \in \mathbb{N}} \subset X$ and of positive numbers $r_i \rightarrow 0^+ $ such that $$ D_{H_i,r_i}(p_i)=\frac{1}{20}r_i, \qquad \dist_{\mathrm{pGH}}((B_{r_i}(p_i) ,p_i), (B_{r_i}(0_n),0_n)) < \epsilon_i \cdot r_i \text{ with } \epsilon_i \rightarrow 0,$$
    because of the following reason: thanks to \cite[Proposiion 3.6]{FY}, after taking a convergent subsequence we have 
        $$ \left( {\color{blue}B_{r_i}(p_i)},\frac{1}{r_i}\cdot \dist,p_i, H_i\right)  \stackrel{\mathrm{peGH}}{\rightarrow} \left ({\color{blue}B_1(0_n)},\dist_{\mathrm{Euc}},0_n,H \right),$$
    for some {\color{blue}non-trivial} closed subgroup $H < \operatorname{Isom}(\mathbb{R}^n)$, where ``peGH'' stands for the pointed equivalent Gromov-Hausdorff convergence. We have since the quantity $D_{H,1}$ is defined in purely metric terms and thus carries over by {\color{blue}pe}GH-convergence (with the rescaled metric):
    \begin{equation}\label{lower}
        {\color{blue}\frac{1}{20}=\liminf_{i \to \infty}D_{H_i,1}(p_i)\ge D_{H,1}(0_n).}
    \end{equation} But it is well known that there exists no non-trivial closed subgroup  $H<\operatorname{Isom}(\mathbb{R}^n)$ with $D_{H,1}(0_n)\leq \frac{1}{20}.$
    \par
    Therefore, {\color{blue}in the sequel, let us} find $p_i$ and $r_i$ as above. {\color{blue}Fix} an arbitrary $p \in \mathcal{R}_n$. 
    
   {\color{blue}Firstly it follows from (\ref{D0}) after taking a suitable subsequence that there exists $s_i \to 0^+$ such that 
    $$D_{H_i, s_i}(p) \le \frac{s_i}{100}.$$
    Since $H_i$ is non-trivial, we can find $q_i \in X$ with $\rho_{H_i}(q_i)>0$. Recalling that the set $\mathcal{R}_n$ is dense by Theorem \ref{Deng theorem regular set}, with no loss of generality we can assume $q_i \in \mathcal{R}_n$.

    Fix a sequence $\epsilon_i \rightarrow 0$ and consider the function 
        $$\mathrm{GHR}_{\epsilon_i} \colon \mathcal{R}_n \rightarrow \mathbb{R}, q \mapsto \sup\lbrace r>0 \vert \, \dist_{\mathrm{pGH}}((B_{r}(q) ,q), (B_{r}(0_n),0_n)) < \epsilon_i \cdot r  \rbrace.$$
        This function is lower semicontinuous on $\mathcal{R}_n$. Therefore it takes its minimum $>0$ on every continuous path $\gamma_i$ connecting $p$ and $q_i$, which exists since $\mathcal{R}_n$ is path connected by Theorem \ref{Deng theorem weakly connected}. 
        Denoting by $\bar s_i>0$ this minimum on a fixed $\gamma_i$, then we have for some $\delta_i \to 0^+$ $$\dist_{\mathrm{pGH}}((B_{s}(\gamma_i(t)) ,\gamma_i(t)), (B_{s}(0_n),0_n)) < \delta_i \cdot s$$ for all $t$ and $0<s<\bar s_i$, 
        where we used the non-collapsed condition here with Theorem \ref{thm:reifenberg}.
    
    From now on, fixing $0<t_i<\min\{s_i, \bar s_i\}$ with $D_{H_i, t_i}(q_i) \ge \frac{t_i}{20}$ whose existence is guaranteed by the observation (\ref{lower}), let us divide the arguments into the following two cases.
    Note that the function $D_{H_i,r}(q)$ for fixed $H_i$ is continuous in $q,r$ if the closure of an open ball coincides with the closed ball. This condition is true in inner metric spaces. However by completeness of $X$ and Theorem \ref{Theorem: RCD is intrinsic} finite dimensional RCD$(K,N)$ spaces are inner.

    \textit{The case when $D_{H_i, t_i}(p) \ge \frac{t_i}{20}$.} Applying the intermediate value theorem to the function $s \mapsto s^{-1}D_{H_i, s}(p)$, we can find $t_i \le u_i \le s_i$ with $D_{H, u_i}(p)=\frac{u_i}{20}$.
    
    \textit{The case when $D_{H_i, t_i}(p) \le \frac{t_i}{20}$.}  Recalling $D_{H_i, t_i}(q_i) \ge \frac{t_i}{20}$, we can also apply the intermediate value theorem to the function $D_{H_i, t_i}(\gamma_i(t))$ in $t$, in order to show $D_{H_i, t_i}(\gamma_i(v_i))=\frac{t_i}{20}$ for some $v_i$.


    In the both cases above, we can find the desired $p_i, r_i$. Thus we conclude. 
    }

    \end{proof}

With a minor modification we obtain Proposition \ref{Lie localization} as a corollary. 

\begin{proof}[Proof of Proposition \ref{Lie localization} {\color{blue}under the non-collapsed condition}]  
We can apply the same proof as above with the following modification: Assume $\operatorname{Iso}(B_r(p))$ is not a Lie group. As above we construct sequences $H_i$, $p_i \in B_r(p)$ and $r_i>0$. The set of regular points $\mathcal{R}_k \cap B_{2r}(p)$ is still dense  in $B_{2r}(p)$ and by Theorem \ref{Deng theorem weakly connected} the set $\mathcal{R}_k$ is weakly convex. By this, if we fix $\varepsilon>0$ small enough we can assume that all paths $\gamma_i$ are contained in $B_{2r}(p)$ and arrive at the same contradiction, {\color{blue}where note that any open ball of an inner metric space is also inner.}
\end{proof}
{\color{blue}
In the general case (that is if the space is not non-collapsed) we can not make use of Theorem \ref{thm:reifenberg}. However, we can get around that using the results from \cite{deng2020holder}. 
\begin{proof}[Proof of Proposition \ref{Lie localization} in the general case]  
First observe that the Reifenberg flatness holds on a compact sub-interval of a geodesic by \cite[Theorem 1.1]{deng2020holder}. The second observation is that the set of points $(x,y) \in X \times X$ such that geodesics from $x$ to $y$ are extendable past $x$ and $y$ respectively has full measure (compare \cite[Proof of Theorem 6.2]{deng2020holder}). With this one can assume that the points in the proof under the non-collapsed condition lie in the interior of a geodesic and thus can apply the Reifenberg flatness due to \cite[Theorem 1.1]{deng2020holder}.

\end{proof}}

\section{Locally homogeneity implies the non-collapsed condition}\label{secnon}
Let $(X, \dist, \meas)$ be an $\RCD(K, N)$ space for some $K \in \mathbb{R}$ and some $N \in [1, \infty)$ and let us denote by $n$ the essential dimension in the sense of Theorem \ref{Deng theorem regular set}.

In the sequel, we assume that $(X, \dist, \meas)$ is locally metric measure homogeneous. The goal of this section to prove a partial result, Lemma \ref{top}, to prove Theorem \ref{LHRCD}. 
\begin{lemma}\label{meashaus}
We see that $\meas=c\haus^n$ holds for some $c>0$ and that any point of $X$ is an $n$-dimensional regular point.
\end{lemma}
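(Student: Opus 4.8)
The plan is to exploit the defining feature of local metric measure homogeneity: for every pair $x,y\in X$ there is a bijective distance-preserving map $\Phi\colon B_r(x)\to B_r(y)$ with $\Phi(x)=y$ and $\Phi_{\sharp}(\meas\res_{B_r(x)})=\meas\res_{B_r(y)}$. The first thing I would record is its purely measure-theoretic consequence, namely that concentric balls at $x$ and $y$ have equal measure at small scales. Indeed, since $\Phi$ is a bijective isometry fixing the centers, $\Phi(B_s(x))=B_s(y)$ for every $0<s<r$, and applying the push-forward identity to $E=B_s(y)$ gives
\begin{equation*}
\meas(B_s(y))=\meas\big(\Phi^{-1}(B_s(y))\big)=\meas(B_s(x)),\qquad 0<s<r.
\end{equation*}
Hence the normalized volume ratio $\meas(B_s(\cdot))/(\omega_n s^n)$ takes, at small scales, the same values at any two points of $X$.

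Next I would transport both regularity and the density from a single good point to all of $X$. By Theorem \ref{reduced}(1) the set $\mathcal{R}_n^*$ has full measure, and as $\meas$ has full support it is nonempty; fix $x_0\in\mathcal{R}_n^*\subseteq\mathcal{R}_n$, so that $\mathrm{Tan}(X,\dist,\meas,x_0)=\{(\mathbb{R}^n,\dist_{\mathrm{Euc}},\frac{1}{\omega_n}\haus^n,0_n)\}$ and the finite positive limit $\theta(x_0):=\lim_{s\to0^+}\meas(B_s(x_0))/(\omega_n s^n)=c_0\in(0,\infty)$ exists. For an arbitrary $y\in X$, the map $\Phi$ above identifies the pointed metric measure structures near $x_0$ and near $y$; since the normalizing masses $\meas(B_{s_i}(x_0))$ and $\meas(B_{s_i}(y))$ coincide for small $s_i$ by the first step, along any $s_i\to0^+$ the two rescaled pointed spaces are isometric as pointed metric measure spaces on balls of any fixed radius, and therefore have the same pmGH limits. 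Consequently $\mathrm{Tan}(X,\dist,\meas,y)=\mathrm{Tan}(X,\dist,\meas,x_0)$, so $y$ is an $n$-dimensional regular point, which yields $X=\mathcal{R}_n$. Simultaneously the ball-measure equality forces $\theta(y)=c_0$, whence $y\in\mathcal{R}_n^*$, so in fact $X=\mathcal{R}_n^*$ and the density is the constant $c_0$.

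To conclude I would invoke Theorem \ref{reduced}(1)(b)--(c): on $\mathcal{R}_n^*=X$ the measures $\meas$ and $\haus^n$ are mutually absolutely continuous and $\frac{\di\meas}{\di\haus^n}=\theta=c_0$ for $\meas$-a.e. point, which forces $\meas=c_0\haus^n$ on all of $X$ and proves the lemma with $c=c_0$. The elementary steps here are harmless; the one point requiring care is the existence of the density limit at every single point, which I would not establish directly but rather transport from one point of $\mathcal{R}_n^*$ via the ball-measure equality. In other words, the whole argument consists of upgrading the generic, full-measure regularity and density statements of Theorems \ref{Deng theorem regular set} and \ref{reduced} to everywhere statements, and the crux is to use that the local isometries are measure preserving, so that they identify the entire pointed metric measure structure — tangent cones and densities alike — rather than only the metric one.
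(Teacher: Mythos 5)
Your argument is correct and is essentially the same as the paper's proof, which states tersely that local homogeneity gives $X=\mathcal{R}_n^*$ with a point-independent density and then invokes Theorem \ref{reduced}; you simply fill in the implicit details (equality of ball measures under the pointed measure-preserving isometries, transport of tangent cones and of the density from one point of $\mathcal{R}_n^*$ to every point, and the identification of the constant density with the Radon--Nikodym derivative). No gaps.
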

\begin{proof}
Applying the local homogeneity, we have $X = \mathcal{R}_n^*$ (recall Theorem \ref{reduced} for the definition of $\mathcal{R}_n^*$). Moreover the limit  
\begin{equation}\label{RN}
\lim_{r \to 0^+}\frac{\meas(B_r(x))}{\omega_nr^n}
\end{equation} 
 does not depend on $x \in X$, thus we denote by $c$ the limit. 
 Then it follows from Theorem \ref{reduced} that $\meas=c\haus^n$ holds.
\end{proof}
Next let us check the non-collapsed condition in the sense of \cite{DG}.
\begin{lemma}\label{top}
$(X, \dist, \haus^n)$ is a non-collapsed $\RCD(K, n)$ space. In particular $X$ is bi-H\"older homeomorphic to a Riemannian manifold of dimension $n$.
\end{lemma}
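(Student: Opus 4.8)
The plan is to read off the statement from Lemma \ref{meashaus} together with the structure theory of non-collapsed spaces from \cite{DG, KM}. First I would record that by Lemma \ref{meashaus} we have $\meas = c\haus^n$ for some $c>0$, and that every point of $X$ is an $n$-dimensional regular point. After replacing $\meas$ by $c^{-1}\meas = \haus^n$ -- which changes neither the Cheeger energy up to a constant factor, nor the minimal relaxed slopes, the Laplacian, or the Bochner inequality, all of these being invariant under scaling the reference measure by a positive constant -- the triple $(X, \dist, \haus^n)$ is again an $\RCD(K,N)$ space. Thus the issue is not the absolute continuity or the identification of the measure, which Lemma \ref{meashaus} has already settled, but the value of the synthetic dimension parameter.

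The heart of the matter is to sharpen this parameter from $N$ down to the essential dimension $n$. Here I would invoke the characterization of non-collapsed spaces: an $\RCD(K,N)$ space whose reference measure coincides with the Hausdorff measure $\haus^n$ of its essential dimension $n$ is in fact a non-collapsed $\RCD(K,n)$ space (see \cite{DG}); recall from Theorem \ref{Deng theorem regular set} that $n$ is a well-defined integer with $n \le N$. This is exactly the point where local homogeneity has already done its work through Lemma \ref{meashaus}, which guarantees both that $\meas$ is a constant multiple of $\haus^n$ and that the density limit (\ref{RN}) is finite, positive, and independent of the base point. I expect this dimension reduction to be the main obstacle, in the sense that it is the only step that cannot be obtained by a soft scaling argument and genuinely appeals to the non-collapsed theory; once it is granted, $(X, \dist, \haus^n)$ is a non-collapsed $\RCD(K,n)$ space directly by definition, since its reference measure is literally $\haus^n$.

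For the final ``in particular'' assertion I would use that, by Lemma \ref{meashaus}, every point of $X$ is an $n$-dimensional regular point, so that at each $p \in X$ and every $\epsilon>0$ there is a scale $r$ with $\dist_{\mathrm{pGH}}((B_r(p),p),(B_r(0_n),0_n)) < \delta(K,n,\epsilon)\cdot r$, where $\delta$ is as in Theorem \ref{thm:reifenberg}. That theorem then propagates the Euclidean closeness to all smaller scales, uniformly on a neighborhood of $p$, which is precisely the Reifenberg-flatness hypothesis under which the topological Reifenberg theorem, in the form established for non-collapsed $\RCD$ spaces in \cite{KM}, produces a bi-H\"older homeomorphism from a neighborhood of $p$ onto an open subset of $\mathbb{R}^n$. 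Since these charts cover $X$, with transition maps that are bi-H\"older homeomorphisms between open subsets of $\mathbb{R}^n$, the space $X$ acquires the structure of an $n$-dimensional topological manifold and is bi-H\"older homeomorphic to a smooth, hence Riemannian, manifold of dimension $n$, completing the proof.
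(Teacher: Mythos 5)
Your overall skeleton coincides with the paper's: first pass from Lemma \ref{meashaus} to the non-collapsed condition, then obtain the bi-H\"older statement from the regularity of every point via Theorem \ref{thm:reifenberg} together with the topological Reifenberg theorem of Cheeger--Colding/Kapovitch--Mondino. The second half of your argument is essentially the paper's.

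The problem is in the step you yourself single out as ``the heart of the matter''. You dispose of the dimension reduction from $N$ to $n$ by citing a ``characterization of non-collapsed spaces'' from \cite{DG}, but no such characterization is in \cite{DG}: there, non-collapsed means $\meas=\haus^N$ with $N$ the \emph{parameter} in $\RCD(K,N)$, and the implication from ``$\meas$ is (a multiple of) a Hausdorff measure'' to ``non-collapsed with the reduced parameter'' is precisely the De Philippis--Gigli conjecture, which \cite{DG} raises but does not prove. The theorem you actually need is its resolution, i.e.\ \cite[Theorem 1.5]{BGHZ} (after \cite[Corollary 1.3]{H}) combined with \cite[Theorem 3.12]{BrueSemola}, and --- crucially --- these results are not applied for free: the paper reduces Lemma \ref{top} to verifying that on every compact $A\subset X$ there are $r>0$ and $C>1$ with $C^{-1}\le \haus^n(B_s(x))/s^n\le C$ for all $x\in A$ and $0<s\le r$, and then proves this \emph{uniform} two-sided density bound using the compactness of $A$ together with the local homogeneity (via \cite[Lemma 3.10]{LN}). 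The pointwise limit (\ref{RN}) supplied by Lemma \ref{meashaus} gives the density at each fixed point but not the uniformity in $x$ and $s$, which is exactly where homogeneity has to be used a second time. As written, your proof asserts the key implication with a wrong reference and without checking the hypothesis under which the correct references apply; this is a genuine gap, not a stylistic shortcut.
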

\begin{proof}
Thanks to Lemma \ref{meashaus} and \cite[Theorem 1.5]{BGHZ} (after \cite[Corollary 1.3]{H}) {\color{blue}together with \cite[Theorem 3.12]{BrueSemola}}, it is enough to prove that for any compact subset $A \subset X$ there exist $r>0$ and $C>1$ such that 
\begin{equation}
C^{-1}\le \frac{\haus^n(B_s(x))}{s^n} \le C, \quad \text{for all $x \in A$ and $0<s \le r$.}
\end{equation}
However this is an easy consequence of the compactness of $A$ with the proof of Lemma \ref{meashaus} {\color{blue}because of \cite[Lemma 3.10]{LN}}. Thus we conclude the first statement. The last statement is a direct consequence of {\color{blue}applying Theorem \ref{thm:reifenberg} with the first statement, Lemma \ref{meashaus} and} \cite[Theorem A.1.1]{CheegerColding1} (see also \cite[Theorem 3.1]{KM}).
\end{proof}

\begin{remark}\label{remtop}
One needs to make sure that the topological assumptions are satisfied to run the same arguments as in \cite{LN}, compare Lemma 3.16 therein. However, these conditions are satisfied in view of Lemma \ref{top} and the existence of $\varepsilon$-GH approximations, which can be taken to be homeomorphisms.
\end{remark}

\section{Lebesgue point}\label{secleb}
In this section we prepare auxiliary results, related to the subharmonicity, which are already observed in \cite[Remark 2.10]{BPS}. Though they are avoidable to realize Theorem \ref{LHRCD}, they give us better understanding in the discussions below. Note that $D(\Delta, B_R(x))$ denotes the domain of (local) Laplacian defined on a ball $B_R(x)$ of $X$, see for instance \cite[Definition {\color{blue}2.16}]{AmbrosioHonda} for the precise definition.
\begin{lemma}\label{0}
Let $(X, \dist, \meas)$ be an $\RCD(K, N)$ space and let $f \in D(\Delta, B_R(x))$. If $\Delta f$ is locally Lipschitz on $B_R(x)$, then the limit
\begin{equation*}
\lim_{r \to 0^+}\intav_{B_r(y)}|\nabla f|^2\di \meas
\end{equation*}
exists for any $y \in B_R(x)$. Moreover, denoting by $|\nabla f|^*(y)$ (or $|\nabla f|(y)$ for short) the square root of the limit, we have for all $y \in B_R(x)$ and $ p \in (0, \infty)$
\begin{equation*}
\lim_{r \to 0^+}\intav_{B_r(y)}\left||\nabla f|^p-|\nabla f|^*(y)^p\right|\di \meas=0.
\end{equation*}
\end{lemma}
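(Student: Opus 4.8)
The plan is to show that $u:=|\nabla f|^2$ is \emph{almost subharmonic} on every ball compactly contained in $B_R(x)$, and then to combine this one-sided Laplacian bound with the boundedness of $u$ to produce a Lebesgue representative at \emph{every} point.

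First I would collect the regularity inputs. Since $\Delta f$ is locally Lipschitz, in particular $\Delta f\in L^\infty_{\mathrm{loc}}(B_R(x))$, the Lipschitz regularity for the Poisson equation on $\RCD(K,N)$ spaces (the Poisson version of the estimate in \cite{Jiang}) shows that $f$ has a locally Lipschitz representative; hence $|\nabla f|\in L^\infty_{\mathrm{loc}}(B_R(x))$ and $u\in L^\infty_{\mathrm{loc}}(B_R(x))$. Moreover $|\nabla\Delta f|$ is bounded by the local Lipschitz constant of $\Delta f$, so by Cauchy--Schwarz the right-hand side
\begin{equation*}
h:=2\left(\frac{(\Delta f)^2}{N}+\langle\nabla\Delta f,\nabla f\rangle+K|\nabla f|^2\right)
\end{equation*}
of the Bochner inequality (\ref{s8sabasy}) also lies in $L^\infty_{\mathrm{loc}}(B_R(x))$. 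Reading (\ref{s8sabasy}) against nonnegative test functions says precisely that $u$ has a measure-valued Laplacian bounded below, $\Delta u\ge h\,\meas$ in the weak sense; in particular, on any ball $B'$ compactly contained in $B_R(x)$ there is a constant $C>0$ with $\Delta u\ge-C$.

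Next I would establish existence of the limit. Working on such a $B'$ and using the heat semigroup $P_t$ (localized by a cutoff, or the Dirichlet heat semigroup), the order preservation of $P_t$ together with stochastic completeness ($P_t1=1$, valid on $\RCD(K,N)$ spaces) turns $\Delta u\ge-C$ into $\frac{\di}{\di t}P_tu=P_t\Delta u\ge-C$. Hence $t\mapsto P_tu(y)+Ct$ is non-decreasing for every $y$, and being bounded below it admits a limit, so $u^*(y):=\lim_{t\to0^+}P_tu(y)$ exists at every $y$; as $u^*=\inf_{t>0}(P_tu+Ct)$ is an infimum of continuous functions it is upper semicontinuous. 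The remaining point is to pass from heat averages to the ball averages in the statement: using the Gaussian two-sided heat-kernel bounds and volume doubling on $\RCD(K,N)$ spaces one checks that $\lim_{r\to0^+}\intav_{B_r(y)}u\,\di\meas$ exists and equals $u^*(y)$ for every $y$. I expect this heat-average versus ball-average comparison to be the main obstacle; alternatively one may run the same monotonicity directly with the sub-mean-value inequality on balls supplied by potential theory on PI spaces. This proves the first assertion and identifies $|\nabla f|^*(y)^2=u^*(y)$.

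Finally I would upgrade to the $L^p$-Lebesgue statement, for which it suffices to treat $p=1$: writing $M$ for the local supremum of $u$ and noting $0\le u^*(y)\le M$, for $p\ge1$ one has $|u^p-u^*(y)^p|\le pM^{p-1}|u-u^*(y)|$, while for $p\in(0,1)$ the elementary inequality $|u^p-u^*(y)^p|\le|u-u^*(y)|^p$ and Jensen give $\intav_{B_r(y)}|u^p-u^*(y)^p|\le\big(\intav_{B_r(y)}|u-u^*(y)|\big)^p$. For $p=1$, set $a:=u^*(y)$ and split $\intav_{B_r(y)}|u-a|=\intav_{B_r(y)}(u-a)^++\intav_{B_r(y)}(a-u)^+$. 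Upper semicontinuity of $u^*$ together with $u=u^*$ $\meas$-a.e.\ (Lebesgue differentiation) gives $u\le a+\eps$ a.e.\ on a small ball around $y$, so $\intav_{B_r(y)}(u-a)^+\le\eps$ for small $r$; since also $\intav_{B_r(y)}(u-a)\to0$, the identity $(a-u)^+=(u-a)^+-(u-a)$ forces $\intav_{B_r(y)}(a-u)^+\to0$ as well. Hence $\intav_{B_r(y)}|u-a|\to0$, which is the claim.
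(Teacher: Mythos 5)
Your proof rests on exactly the same two pillars as the paper's: the Bochner inequality (\ref{s8sabasy}) combined with Jiang's Lipschitz estimate \cite{Jiang} to obtain $\mathbf{\Delta}|\nabla f|^2\ge -C$ as a measure on balls compactly contained in $B_R(x)$, and then potential theory for functions with a one-sided Laplacian bound to upgrade the almost-everywhere Lebesgue-point property to \emph{every} point. The difference lies in how the second pillar is executed. The paper adds a corrector $\phi$ with $\Delta\phi=C$ (Lipschitz, again by \cite{Jiang}), so that $|\nabla f|^2+\phi$ is genuinely subharmonic, and then invokes \cite[Proposition 8.24]{BjornBjorn}, which delivers both the existence of the ball-average limit and the $L^p$ convergence at every point in one stroke. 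Your primary route via the heat semigroup has the gap you yourself flag: identifying $\lim_{t\to 0^+}P_tu(y)$ with $\lim_{r\to 0^+}\intav_{B_r(y)}u\,\di\meas$ at every point, not just $\meas$-a.e., is not a soft consequence of Gaussian bounds and doubling --- it is essentially equivalent to the statement being proved --- so as written that branch does not close. Your fallback (running the monotonicity directly on balls via the sub-mean-value inequality on PI spaces) is precisely the paper's argument. On the other hand, your endgame --- reducing $L^p$ to $L^1$ by interpolating against the local $L^\infty$ bound, and then deducing the $L^1$ statement from upper semicontinuity of the representative together with the convergence of the signed averages --- is a correct, self-contained substitute for the full strength of the cited proposition; it only requires that the ball-based (rather than heat-based) construction of the representative be secured first.
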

\begin{proof}
The Bochner inequality (\ref{s8sabasy}) with the Lipschitz regularity result obtained in \cite[Theorem 3.1]{Jiang} shows
\begin{equation*}
\mathbf{\Delta} |\nabla f|^2\ge -C
\end{equation*}
for some $C>0$, where the Laplacian in the left-hand-side of the above is taken as a measure (cf. \cite{Gigli}).  Then finding $\phi$ with $\Delta \phi=C$ on a ball contained in $B_R(x)$, we know that $|\nabla f|^2+\phi$ is subharmonic on the ball. Then applying \cite[Proposition 8.24]{BjornBjorn} for $|\nabla \phi|^2+\phi$ (with the Lipschitz continuity of $\phi$ by \cite[Theorem 3.1]{Jiang}) completes the proof.
\end{proof}
\begin{corollary}\label{corcor}
Let $(X, \dist, \meas)$ be an $\RCD(K, N)$ space and let $f_i \in D(\Delta, B_R(x)) (i=1,2)$. If each $\Delta f_i$ is locally Lipschitz on $B_R(x)$, then the limit
\begin{equation*}
\lim_{r \to 0^+}\intav_{B_r(y)}\langle \nabla f_1, \nabla f_2\rangle \di \meas
\end{equation*}
exists for any $y \in B_R(x)$. Moreover, denoting by $\langle \nabla f_1,\nabla f_2\rangle^*(y)$ (or $\langle \nabla f_1, \nabla f_2\rangle(y)$ for short) the limit, we have for any $y \in B_R(x)$
\begin{equation*}
\lim_{r \to 0^+}\intav_{B_r(y)}\left|\langle \nabla f_1,\nabla f_2\rangle-\langle \nabla f_1, \nabla f_2\rangle^*(y)\right|\di \meas=0.
\end{equation*}
\end{corollary}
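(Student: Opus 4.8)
The plan is to deduce the corollary directly from Lemma \ref{0} by a polarization argument, since the two statements are the bilinear counterparts of the quadratic statements already proved there. The crucial preliminary observation is that the Laplacian is linear, so that $f_1 + f_2 \in D(\Delta, B_R(x))$ with $\Delta(f_1+f_2) = \Delta f_1 + \Delta f_2$, and this right-hand side is locally Lipschitz on $B_R(x)$ as a sum of two locally Lipschitz functions. Consequently all three functions $f_1$, $f_2$ and $f_1+f_2$ satisfy the hypotheses of Lemma \ref{0}.

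First I would apply Lemma \ref{0} with $p=2$ to each of $f_1$, $f_2$ and $f_1+f_2$, obtaining that the averaged integrals $\intav_{B_r(y)}|\nabla f_1|^2\di\meas$, $\intav_{B_r(y)}|\nabla f_2|^2\di\meas$ and $\intav_{B_r(y)}|\nabla(f_1+f_2)|^2\di\meas$ each converge as $r\to 0^+$, to the respective values $|\nabla f_1|^*(y)^2$, $|\nabla f_2|^*(y)^2$ and $|\nabla(f_1+f_2)|^*(y)^2$. Since $(X,\dist,\meas)$ is infinitesimally Hilbertian, the pointwise polarization identity
\[
\langle \nabla f_1, \nabla f_2\rangle = \frac12\left(|\nabla(f_1+f_2)|^2 - |\nabla f_1|^2 - |\nabla f_2|^2\right)\qquad \meas\text{-a.e.}
\]
holds. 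Averaging this over $B_r(y)$ and letting $r\to 0^+$ then shows that the limit defining $\langle \nabla f_1, \nabla f_2\rangle^*(y)$ exists and equals $\tfrac12\big(|\nabla(f_1+f_2)|^*(y)^2 - |\nabla f_1|^*(y)^2 - |\nabla f_2|^*(y)^2\big)$, which establishes the first claim.

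For the second (Lebesgue-point) claim I would subtract the polarization of the limit values from the polarization of $\langle \nabla f_1, \nabla f_2\rangle$ itself, so that the triangle inequality bounds the integrand $\big|\langle \nabla f_1, \nabla f_2\rangle - \langle \nabla f_1, \nabla f_2\rangle^*(y)\big|$ pointwise by $\tfrac12$ times the sum of the three terms $\big||\nabla g|^2 - |\nabla g|^*(y)^2\big|$ for $g\in\{f_1,f_2,f_1+f_2\}$. Averaging over $B_r(y)$ and invoking the $L^1$-oscillation conclusion of Lemma \ref{0} with $p=2$ for each of the three choices of $g$ forces the average to tend to $0$ as $r\to 0^+$, as required.

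This argument is entirely routine and I do not anticipate a genuine obstacle: every step reduces to the already-established Lemma \ref{0}, and the only point requiring a word of justification is that $f_1+f_2$ inherits both hypotheses (membership in $D(\Delta,B_R(x))$ and local Lipschitz continuity of its Laplacian), which is immediate from the linearity of $\Delta$ and the stability of local Lipschitz continuity under addition.
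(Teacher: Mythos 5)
Your proposal is correct and follows essentially the same route as the paper, which also deduces the corollary from Lemma \ref{0} via the polarization identity $\langle \nabla f_1, \nabla f_2\rangle= \frac{1}{2}(|\nabla (f_1+f_2)|^2-|\nabla f_1|^2-|\nabla f_2|^2)$. Your write-up merely makes explicit the routine verifications (that $f_1+f_2$ inherits the hypotheses, and the triangle-inequality step) that the paper leaves implicit.
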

\begin{proof}
It is a direct consequence of Lemma \ref{0} with the following identity in $L^1$:
\begin{equation*}
\langle \nabla f_1, \nabla f_2\rangle= \frac{1}{2}\left(|\nabla (f_1+f_2)|^2-|\nabla f_1|^2-|\nabla f_2|^2 \right).
\end{equation*}
\end{proof}
\section{Derivative}\label{deriv}
Let $(X, \dist, \meas)$ be an $\RCD(K, N)$ space whose essential dimension is equal to $n$, let $M$ be a smooth (not necessarily Riemannian) manifold of dimension $n$ and let $h:X \to M$ be a map.
Assume that $h$ is locally Lipschitz with respect to some (thus equivalently any) Riemannian metric on $M$. 

The main purpose of this section is to show that the derivative
\begin{equation*}
(Dh)_x:\mathbb{R}^n \to T_xM
\end{equation*}
is canonically well-defined in some sense for $\meas$-a.e. $x \in \mathcal{R}_n$. Moreover if $(Dh)_x$ is bijective for such a point, $x \in \mathcal{R}_n$, then we will see that the push-forward scalar product 
\begin{equation*}
g^h_x=((Dh)_x)_{\sharp}g
\end{equation*}
on $T_{h(x)}M$ makes sense.

Before starting the precise discussion, we define the following.
\begin{definition}[Differentiability]\label{differen}
Let $A$ be a Borel subset of $\mathbb{R}^n$ with $\haus^n(A) >0$, let $x \in \mathrm{Leb}(A)$, namely $x \in A$ with $$\frac{\haus^n(B_r(x)\cap A)}{\haus^n(B_r(x))}\to 1$$ as $r \to 0^+$, and let $F:A \to \mathbb{R}^k$ be a Lipschitz map.
We say that $F$ is \textit{differentiable} at $x$ if there exists a Lipschitz map $\tilde F:\mathbb{R}^n \to \mathbb{R}^l$ such that $\tilde F|_A=F$ and that $\tilde F$ is differentiable at $x$.
\end{definition}
Note that this definition and the Jacobi matrix $J(\tilde F)_x$ of $\tilde F$ at $x$ do not depend on the choice of $\tilde F$ (thus we shall use the notation $J(F)_x=J(\tilde F)_x$).
\subsection{The case when $M$ is an open subset $U$ of $\mathbb{R}^n$; $M=U$.}
For any $x \in X$ and any small $r>0$, let 
\begin{equation*}\label{1}
h_r:=\frac{h-h(x)}{r}.
\end{equation*}
\begin{lemma}\label{pointwise}
For $\meas$-a.e. $x \in \mathcal{R}_n$, we have the following:  
under the pmGH-convergenve as $r \to 0^+$:
\begin{equation}\label{pmgh}
\left(X, \frac{1}{r}\dist, \haus^n_{\frac{1}{r}\dist}, x\right) \to \left(\mathbb{R}^n, \dist_{\mathrm{Euc}}, \haus^n, 0_n\right),
\end{equation}
for all two blow-up limit maps:
\begin{equation*}
h_0=\lim_{r_i \to 0^+}h_{r_i}: \mathbb{R}^n \to \mathbb{R}^n
\end{equation*}
and
\begin{equation*}
\tilde h_0=\lim_{s_i \to 0^+}h_{s_i}: \mathbb{R}^n \to \mathbb{R}^n,
\end{equation*}
we see that both $h_0$ and $\tilde h_0$ are linear maps and that they coincide, up to an action by an element of $O(n)$ in the domain.
In particular if some $h_0$ is bijective, then we can define the push-forward scalar product $g_{h(x)}=\langle \cdot, \cdot \rangle_{h(x)}$ on $T_{h(x)}M$  by
\begin{equation*}
\langle v, w \rangle_{h(x)}:=h_0^{-1}(v)\cdot h_0^{-1}(w)
\end{equation*}
which is independent of the choice of $r_i \to 0^+$.
\end{lemma}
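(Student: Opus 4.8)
The plan is to reduce everything to the scalar Rademacher-type differentiability of Theorem \ref{Rad} applied componentwise, and then to isolate a single intrinsic quantity — the Gram matrix of the ``gradient'' of $h$ at $x$ — which is independent of the blow-up sequence and which pins down all blow-up limits up to $O(n)$.

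First I would write $h=(h^1,\dots,h^n)$ with each $h^j\colon X\to\mathbb{R}$ locally Lipschitz, so that $h_r=(h^1_r,\dots,h^n_r)$ with $h^j_r=\frac{1}{r}(h^j-h^j(x))$. Since $h^j$ is locally Lipschitz, Remark \ref{Radrem} licenses exactly the normalization $h^j_r$, and Theorem \ref{Rad} then supplies a $\meas$-full set $E_j\subset\mathcal{R}_n$ such that for every $x\in E_j$ and every $r_i\to0^+$, after passing to a subsequence, $h^j_{r_i}$ converges $H^{1,2}_{\mathrm{loc}}$-strongly to a linear function on $\mathbb{R}^n$ along the blow-up (\ref{pmgh}). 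Working on $E:=\bigcap_j E_j$, intersected with the $\meas$-full set of Lebesgue points introduced below, and using that $h$ is equi-Lipschitz on the rescaled spaces (the Lipschitz constant of $h^j_r$ with respect to $\frac{1}{r}\dist$ is that of $h^j$ with respect to $\dist$), I would invoke Arzel\`a--Ascoli to upgrade componentwise subsequential convergence to locally uniform convergence of the maps $h_{r_i}$; since $x\in\mathcal{R}_n$ the tangent cone is the unique space $\mathbb{R}^n$, so the limit $h_0$ is a linear map $L\colon\mathbb{R}^n\to\mathbb{R}^n$, whose $j$-th component I write as $u\mapsto\ell_j\cdot u$ for some $\ell_j\in\mathbb{R}^n$. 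This already yields the linearity assertion; the real content is uniqueness up to $O(n)$.

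The key step is to produce a sequence-independent invariant. Because each $\langle\nabla h^i,\nabla h^j\rangle$ lies in $L^\infty\subset L^1_{\mathrm{loc}}$ and $(X,\dist,\meas)$ is locally doubling, the Lebesgue differentiation theorem gives, for $\meas$-a.e.\ $x$, the existence of the full limit
$$
G_{ij}(x):=\lim_{r\to0^+}\intav_{B_r(x)}\langle\nabla h^i,\nabla h^j\rangle\di\meas
$$
(compare Corollary \ref{corcor}, although here only Lebesgue differentiation is needed since $\Delta h^j$ need not be Lipschitz), a quantity that manifestly does not refer to any blow-up sequence. On the other hand, a direct scaling computation gives $\intav_{B_1}|\nabla h^j_r|^2\di\meas_r=\intav_{B_r(x)}|\nabla h^j|^2\di\meas$ on the rescaled space, so the $H^{1,2}_{\mathrm{loc}}$-strong convergence along $r_i$ forces $G_{jj}(x)=|\ell_j|^2$, and after polarization $G_{ij}(x)=\ell_i\cdot\ell_j$; that is, $LL^{\mathsf T}=G(x)$. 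Since $G(x)$ is independent of the sequence, any two blow-up limits $L,\tilde L$ satisfy $LL^{\mathsf T}=\tilde L\tilde L^{\mathsf T}=G(x)$.

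Finally, the linear algebra is immediate: if $L$ is invertible then $O:=L^{-1}\tilde L$ obeys $OO^{\mathsf T}=L^{-1}(\tilde L\tilde L^{\mathsf T})(L^{-1})^{\mathsf T}=L^{-1}(LL^{\mathsf T})(L^{-1})^{\mathsf T}=I$, so $O\in O(n)$ and $\tilde L=LO$, which is precisely coincidence up to an action of $O(n)$ in the domain; moreover positive definiteness of $G(x)$ shows that bijectivity of one limit forces it for all. For the ``in particular'' clause, if $h_0=L$ is bijective I would set $\langle v,w\rangle_{h(x)}:=L^{-1}(v)\cdot L^{-1}(w)$; replacing $L$ by $\tilde L=LO$ replaces $L^{-1}$ by $O^{\mathsf T}L^{-1}$, and since $O^{\mathsf T}\in O(n)$ preserves the Euclidean product the value is unchanged, giving the asserted independence of $r_i\to0^+$. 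I expect the main obstacle to be the bookkeeping of the third paragraph: correctly tracking the scaling of gradients and measures under $\frac{1}{r}\dist$ so that the Lebesgue value $G_{ij}(x)$ is genuinely identified with the Euclidean Gram entry $\ell_i\cdot\ell_j$ of the limit, and confirming that the $H^{1,2}$-strong convergence supplied by Theorem \ref{Rad} passes to the bilinear quantities $\langle\nabla h^i,\nabla h^j\rangle$ and not merely to the $L^2$ norms of the individual gradients.
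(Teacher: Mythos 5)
Your argument is essentially correct, but it takes a genuinely different route from the paper's for the uniqueness part. The paper fixes a harmonic $\delta$-splitting map $\Phi$ from Theorem \ref{reduced}(2), normalizes it via Corollary \ref{corcor} so that $\intav_{B_s(x)}|\langle\nabla\phi_i,\nabla\phi_j\rangle-\delta_{ij}|\to 0$, uses Theorem \ref{harmonic stab} to see that the blow-ups $\Phi_0,\tilde\Phi_0$ are elements of $O(n)$, and then shows that the coefficient matrices of $h_0,\tilde h_0$ relative to these frames agree because $a_{l,i}=\lim_{s\to0^+}\intav_{B_s(x)}\langle\nabla h_l,\nabla\phi_i\rangle\di\meas$ is a sequence-free quantity; the $O(n)$-ambiguity is then exactly $\Phi_0^{-1}\tilde\Phi_0$. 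You instead bypass the splitting maps and the stability of harmonic functions entirely: you extract the sequence-free invariant $G(x)=\left(\lim_{r\to0^+}\intav_{B_r(x)}\langle\nabla h^i,\nabla h^j\rangle\di\meas\right)_{ij}$ by Lebesgue differentiation, identify it with the Gram matrix $LL^{\mathsf T}$ of any blow-up limit via the scaling identity $|\nabla^{(r)}h^j_r|=|\nabla h^j|$ and energy convergence under $H^{1,2}$-strong convergence (your polarization step is the right way to reach the off-diagonal entries), and finish with linear algebra. This is cleaner and more self-contained for this lemma, at the price of being slightly less informative (the paper's frame argument also feeds directly into the proof of Proposition \ref{surj}). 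Two small points to tighten: (i) you prove $\tilde L=LO$ only when $L$ is invertible, whereas the lemma asserts the $O(n)$-coincidence for all blow-up limits; this is repaired by the standard fact that $AA^{\mathsf T}=BB^{\mathsf T}$ forces $A=BQ$ for some $Q\in O(n)$ even in the degenerate case (extend the isometry $B^{\mathsf T}v\mapsto A^{\mathsf T}v$ from $\mathrm{ran}(B^{\mathsf T})$ to all of $\mathbb{R}^n$); (ii) the passage from $H^{1,2}$-strong convergence on a ball $B_R$ to convergence of the Dirichlet energies over the smaller ball $B_1$ uses that $|\nabla h^j_{r_i}|$ converge $L^2$-strongly and that $\partial B_1(0_n)$ is $\haus^n$-null, which is covered by \cite{AmbrosioHonda} but should be said.
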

\begin{proof}
By Theorem \ref{reduced} {\color{blue}(with the use of the maximal function)}, for $\meas$-a.e. $x \in \mathcal{R}_n$, we see that for any small $\epsilon>0$ there exist $r>0$ and a harmonic map 
\begin{equation*}
\Phi=(\phi_1, \ldots, \phi_n): B_{2r}(x) \to \mathbb{R}^n
\end{equation*}
with $\Phi(x)=0_n$ such that 
\begin{equation*}
\intav_{B_{s}(x)}\left|\langle \nabla \phi_i, \nabla \phi_j\rangle -\delta_{ij}\right|\di \meas <\epsilon.
\end{equation*}
holds for all $0<s\le r$ and $1 \le i \le j \le n$. Thus after multiplying a matrix of size $n$ (which is close to the identity matrix $E_n$) to $\Phi$, with no loss of generality, we can assume, by Corollary \ref{corcor}, that
\begin{equation*}
\lim_{s \to 0^+}\intav_{B_{s}(x)}\left|\langle \nabla \phi_i, \nabla \phi_j\rangle -\delta_{ij}\right|\di \meas=0
\end{equation*}
{\color{blue}(see also again \cite[Proposition 4.3]{BrueMondinoSemola}).}
Fix $x$ and $\Phi$ as above. Denote by 
\begin{equation*}
\Phi_0=(\phi_{0,1},\ldots, \phi_{0,n}):\mathbb{R}^n \to \mathbb{R}^n,\quad \tilde \Phi_0=(\tilde \phi_{0,1},\ldots, \tilde \phi_{0,n}):\mathbb{R}^n \to \mathbb{R}^n
\end{equation*}
the blow-up limits of $\frac{\Phi}{r_i}, \frac{\Phi}{s_i}$, respectively, with respect to (\ref{pmgh}) (after passing to a subsequence if necessary).
Then it follows from Theorem \ref{harmonic stab} that both $\Phi_0$ and $\tilde \Phi_0$ are isometries fixing the origin $0_n$.

Thanks to Theorem \ref{Rad} with Remark \ref{Radrem}, with no loss of generality we can also assume that both $h_0$ and $\tilde h_0$ are linear.
Thus denoting by $h=(h_1,\ldots, h_n)$,
\begin{equation*}
h_0=\left(\sum_{i=1}^na_{1, i}\phi_{0,i},\ldots, \sum_{i=1}^na_{n,i}\phi_{0, i}\right),\quad \tilde h_0=\left(\sum_{i=1}^n\tilde a_{1, i}\tilde \phi_{0,i},\ldots, \sum_{i=1}^n\tilde a_{n,i}\tilde \phi_{0, i}\right),
\end{equation*}
we have {\color{blue}by Corollary \ref{corcor}}
\begin{equation*}
a_{l, i}=\lim_{s \to 0^+}\intav_{B_s(x)}\langle \nabla h_l, \nabla \phi_i\rangle \di \meas=\tilde a_{l,i}.
\end{equation*}
The above observation allows us to conclude.
\end{proof}
\begin{lemma}\label{asyntt}
Under the same setting as in Lemma \ref{pointwise}, for any $C^2$-map $F=(f_1,\ldots, f_n):\mathbb{R}^n \to \mathbb{R}^n$, we have
\begin{equation*}
(F\circ h)_0=(DF)_{h(x)} \circ h_0,
\end{equation*}
where $D(F)_y$ denotes the derivative of $F$ at $y$. 
\end{lemma}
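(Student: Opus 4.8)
The plan is to reduce the statement to a first-order Taylor expansion of $F$ at $h(x)$ and to absorb the quadratic remainder into the rescaling. Since $F$ is $C^2$, there is a neighborhood of $h(x)$ and a constant $C>0$ (a bound on $|D^2F|$ on a compact neighborhood) such that
$$
\left| F(y) - F(h(x)) - (DF)_{h(x)}(y - h(x)) \right| \le C|y - h(x)|^2
$$
for all $y$ in that neighborhood. Writing $(F\circ h)_r = \frac{1}{r}\big(F\circ h - F(h(x))\big)$ and substituting $y = h(z)$ into this expansion yields the pointwise identity
$$
(F\circ h)_r(z) = (DF)_{h(x)}\big(h_r(z)\big) + E_r(z), \qquad |E_r(z)| \le \frac{C}{r}\,|h(z) - h(x)|^2 .
$$

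The next step is to control $E_r$ uniformly on rescaled balls. Fix $D>0$ and let $z$ satisfy $\frac{1}{r}\dist(z,x) \le D$, i.e. $\dist(z,x)\le rD$. For $r$ small enough that $h$ is $L$-Lipschitz on $B_{rD}(x)$ (possible since $h$ is locally Lipschitz) and that $h(z)$ lies in the neighborhood above, we get $|h(z)-h(x)| \le LrD$, hence $|E_r(z)| \le CL^2D^2\,r$. Thus $E_r \to 0$ uniformly on every rescaled ball as $r \to 0^+$; this is the only place where the $C^2$ (in fact $C^{1,1}_{\mathrm{loc}}$) regularity of $F$ is genuinely used.

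Finally I would pass to the limit along a blow-up sequence $r_i \to 0^+$ realizing $h_0 = \lim_i h_{r_i}$ as in Lemma \ref{pointwise}. Because $h$ is locally Lipschitz, the maps $h_{r_i}$ are uniformly Lipschitz with respect to the rescaled distances and converge to $h_0$ locally uniformly under the pmGH convergence (\ref{pmgh}); composing with the fixed continuous linear map $(DF)_{h(x)}$ preserves this convergence, so $(DF)_{h(x)}\big(h_{r_i}\big) \to (DF)_{h(x)}\circ h_0$. Combined with $E_{r_i} \to 0$ this gives $(F\circ h)_{r_i} \to (DF)_{h(x)}\circ h_0$, which is the assertion.

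The main obstacle is not analytic but bookkeeping: making "locally uniform convergence" precise in the moving-basepoint framework, since the rescaled maps $h_{r_i}$ live on the varying spaces $\big(X, \frac{1}{r_i}\dist, x\big)$, which only converge to $\mathbb{R}^n$ in the GH sense, so every uniform estimate above must be read through $\epsilon$-isometries (or a common realization of the convergence). Since the targets are the fixed space $\mathbb{R}^n$ and all maps in sight are uniformly Lipschitz, this presents no real difficulty, and the genuine input remains the quadratic remainder bound.
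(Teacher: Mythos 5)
Your proposal is correct and follows essentially the same route as the paper's proof: a first-order Taylor expansion of $F$ at $h(x)$, with the remainder converted into $o(\dist(y,x))$ via the local Lipschitz continuity of $h$ and then killed by the $\frac{1}{r}$ rescaling. The only cosmetic difference is that you quantify the remainder as $C|y-h(x)|^2$ using the $C^2$ hypothesis, while the paper only invokes the $o(|w-z|)$ form of differentiability; both suffice.
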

\begin{proof}
Recalling
\begin{equation*}
F(w)=F(z)+(w-z)J(F)_z+o(|w-z|),
\end{equation*}
we have
\begin{align}\label{tay}
\frac{F\circ h(y)-F\circ h(x)}{r}&=\frac{h(y)-h(x)}{r}\cdot J(F)_{h(x)}+\frac{1}{r}\cdot o(|h(y)-h(x)|) \nonumber \\
&=\frac{h(y)-h(x)}{r}\cdot J(F)_{h(x)}+\frac{1}{r}\cdot o(|y-x|).
\end{align}
Therefore taking the limit $r \to 0$ in (\ref{tay}), 
we conclude.
\end{proof}
\begin{corollary}
Under the same setting as in Lemma \ref{pointwise}, fixing $x$ as in the lemma, assume that some $h_0$ is bijective. Denoting by $|\cdot|_{h(x)}$ the norm induced by the scalar product $\langle \cdot, \cdot \rangle_{h(x)}$ introduced in the lemma, we have
\begin{equation}\label{asy}
\frac{|h(y)-h(x)|_{h(x)}}{\dist(y, x)} \to 1, \quad \text{for any $y \to x$.}
\end{equation}
\end{corollary}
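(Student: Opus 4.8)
The plan is to rewrite (\ref{asy}) as a statement about the blow-up map $h_0$ evaluated on the unit sphere of the tangent cone, and then to reduce it to the convergence of $h_r$ already extracted in Lemma \ref{pointwise}. Set $r:=\dist(y,x)$. Since the norm induced by the scalar product of Lemma \ref{pointwise} is $|v|_{h(x)}=|h_0^{-1}(v)|$ (Euclidean norm on $\mathbb{R}^n$), linearity of $h_0^{-1}$ gives
\[
\frac{|h(y)-h(x)|_{h(x)}}{\dist(y,x)}=\left|h_0^{-1}\!\left(\frac{h(y)-h(x)}{r}\right)\right|=\bigl|h_0^{-1}(h_r(y))\bigr|,
\]
so it suffices to prove that $h_0^{-1}(h_r(y))$ approaches the unit sphere as $y\to x$.

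I would argue along sequences. If (\ref{asy}) were to fail there would be $y_j\to x$, with $r_j:=\dist(y_j,x)\to 0^+$, along which the left-hand side of the display above stays bounded away from $1$. Consider the rescaling (\ref{pmgh}) for this sequence $r_j$. The points $y_j$ lie at rescaled distance $\dist(y_j,x)/r_j=1$ from $x$; by properness of $X$ they subconverge, after passing to a subsequence, to a point $\xi$ of the limit $\mathbb{R}^n$ with $|\xi|=1$, the last equality because pmGH convergence preserves distances to the basepoint. Passing to a further subsequence, Lemma \ref{pointwise} produces a linear blow-up limit $h_0$ of $h_{r_j}$; it agrees with the reference limit used to define $\langle\cdot,\cdot\rangle_{h(x)}$ up to an $O(n)$-action on the domain, and since $|\cdot|_{h(x)}$ is computed identically through any such limit, we may work with this $h_0$. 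The crux is then the evaluation $h_{r_j}(y_j)\to h_0(\xi)$ along the moving points $y_j$.

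The main obstacle is precisely this last convergence, because Lemma \ref{pointwise} identifies the blow-up limit of $h_{r_j}$ at a \emph{fixed} argument, whereas here the argument $y_j$ itself varies. I would overcome it by observing that, as $h$ is locally Lipschitz, the rescaled maps $h_{r_j}$ are uniformly Lipschitz with respect to the rescaled distances $\tfrac{1}{r_j}\dist$; hence, by the Arzel\`a--Ascoli theorem adapted to the convergence (\ref{pmgh}), the convergence $h_{r_j}\to h_0$ furnished by Lemma \ref{pointwise} is locally uniform. Evaluating at a moving sequence is then legitimate: in a realization where all the rescaled spaces embed isometrically and $y_j\to\xi$, the triangle inequality, combined with the uniform Lipschitz bound and the continuity of $h_0$, forces $h_{r_j}(y_j)\to h_0(\xi)$. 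Granting this, $h_0^{-1}(h_{r_j}(y_j))\to h_0^{-1}(h_0(\xi))=\xi$, so the ratio tends to $|\xi|=1$, contradicting the choice of $y_j$. Therefore (\ref{asy}) holds.
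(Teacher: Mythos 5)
Your proposal is correct and follows essentially the same route as the paper: blow up along $r_i=\dist(y_i,x)$, note that the points $y_i$ converge to a unit vector $\xi$ in the tangent cone, identify the limit of the ratio with $|h_0(\xi)|_{h(x)}=|h_0^{-1}(h_0(\xi))|=|\xi|=1$, and use the $O(n)$-ambiguity statement of Lemma \ref{pointwise} to see that the answer is independent of the subsequence. Your explicit treatment of the evaluation at moving points via the uniform Lipschitz bounds and Arzel\`a--Ascoli is a welcome clarification of a step the paper compresses into ``(the proof of) Lemma \ref{pointwise} shows,'' but it is not a different argument.
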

\begin{proof}
Fix a sequence $y_i \to x$ with $y_i \neq x$ and let $r_i=\dist(x, y_i)$. Then (the proof of) Lemma \ref{pointwise} shows, after passing to a subsequence, under the same notations,
\begin{equation}
\frac{|h(y_i)-h(x)|_{h(x)}}{\dist(y_i, x)}=\frac{|h(y_i)-h(x)|_{h(x)}}{r_i} \to |h_0(y)|_{h(x)}
\end{equation}
for some $h_0$ and some $y \in \partial B_1(0_n)$ with respect to (\ref{pmgh}). Recalling the definition of $|\cdot |_{h(x)}$, $|h_0(y)|_{h(x)}$ must be equal to $|y|=1$. This completes the proof of (\ref{asy}).
\end{proof}
Finally let us give the following sufficient condition for the surjectivity of $h_0$. In the proof, we will use an well-known fact; for any $L$-Lipschitz map $f:Z \to W$ between metric spaces,
\begin{equation}\label{liphaus}
\haus^{\alpha}(f(A)) \le L^{\alpha}\haus^{\alpha}(A)
\end{equation}
holds for all $\alpha \ge 0$ and subset $A$ of $Z$. 
\begin{proposition}\label{surj}
Under the same setting as in Lemma \ref{pointwise}, if $(X, \dist,\meas)$ is a non-collapsed $\RCD(K, n)$ space and $\haus^n(h(A))>0$ holds for some Borel subset $A \subset X$ with $\haus^n(A)>0$, then for $\haus^n$-a.e. $x \in A \cap \mathcal{R}_n$, any $h_0$ is bijiective.
\end{proposition}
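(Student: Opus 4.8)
The plan is to recast the statement as the assertion that the \emph{degeneracy set}
\[
B:=\{x\in A\cap\mathcal R_n:\ \det h_0=0\}
\]
is $\haus^n$-negligible. Here I first invoke Lemma~\ref{pointwise}: for $\haus^n$-a.e.\ $x\in A\cap\mathcal R_n$ every blow-up $h_0$ is linear, and any two blow-ups coincide up to an $O(n)$-action on the domain, so $|\det h_0|$ is a well-defined nonnegative number at such $x$ and $h_0$ is bijective exactly when it is nonzero. Thus $B$ is, up to a null set, the locus where \emph{every} blow-up degenerates, and the goal reduces to proving $\haus^n(B)=0$.

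Next I would transfer the problem to Euclidean space. By Theorem~\ref{reduced}(3) I cover $A\cap\mathcal R_n$, up to a $\haus^n$-null set, by countably many Borel pieces $A_i$, each carrying a $(1\pm\eps)$-bi-Lipschitz chart $\Phi_i:A_i\hookrightarrow\mathbb R^n$ that is the restriction of a harmonic map. On $\Phi_i(A_i)$ the composition $g_i:=h\circ\Phi_i^{-1}$ is Lipschitz into $\mathbb R^n$, and since $\Phi_i$ is bi-Lipschitz its blow-up at a regular point is an invertible linear map; consequently, at $\haus^n$-a.e.\ $x\in A_i$ the classical differential $Dg_i$ exists at $\Phi_i(x)$ with $\det Dg_i(\Phi_i(x))=0$ if and only if $\det h_0(x)=0$. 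Hence $\Phi_i(B\cap A_i)$ lies, up to a null set, in the Euclidean degeneracy locus of $g_i$, and the area formula for Lipschitz maps yields
\[
\haus^n\big(g_i(\Phi_i(B\cap A_i))\big)\le\int_{\Phi_i(B\cap A_i)}|\det Dg_i|\,\di\haus^n=0 .
\]
Summing over $i$ and using the bi-Lipschitz bound gives $\haus^n(h(B))=0$. Combined with $\haus^n(h(A\setminus\mathcal R_n))=0$ — which holds since $\meas(X\setminus\mathcal R_n)=0$ (Theorem~\ref{Deng theorem regular set}), $\meas=c\,\haus^n$ (Lemma~\ref{meashaus}), and $h$ is locally Lipschitz so that (\ref{liphaus}) applies — this shows that the entire image mass $\haus^n(h(A))>0$ is carried by the good locus $A\cap\mathcal R_n\setminus B$.

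The step I expect to be the main obstacle is the passage from $\haus^n(h(B))=0$ to $\haus^n(B)=0$: the computation above by itself only yields that the good locus has positive measure, not that $B$ is null, since a priori $h$ could collapse a positive-measure subset onto a $\haus^n$-null set. What is really needed is therefore the \emph{non-collapsing} property that $h$ sends no positive-measure subset of $A$ to a $\haus^n$-null set, and I would extract this from the non-collapsed, constant-density structure of Lemmas~\ref{meashaus} and~\ref{top} rather than from $\haus^n(h(A))>0$ alone. Concretely, I would argue by contradiction: assuming $\haus^n(B)>0$, I would pass to a Lebesgue-density point $x_0$ of $B$, where the constant density $\lim_{r\to0^+}\haus^n(B_r(x_0))/(\omega_n r^n)=c$ (Lemma~\ref{meashaus}) together with the uniform two-sided volume bound of Lemma~\ref{top} pins the rescaled volumes of $B_r(x_0)$, and then confront this with the fact that a degenerate blow-up forces $\haus^n(h(B_r(x_0)))=o(r^n)$. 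Reconciling these — that is, ruling out that $h$ uniformly contracts the $n$-dimensional mass on a full-density piece of $B$ — is the crux; it is precisely here that one must bring in an additional non-degeneracy input for $h$ (in the intended application $h$ is the homeomorphism of Subsection~\ref{constiso}, for which the bi-Lipschitz/non-collapsing bound is available from the constant-density and Reifenberg structure), without which the a.e.\ conclusion cannot be reached, whereas with it one obtains $\haus^n(B)=0$ and hence bijectivity of $h_0$ for $\haus^n$-a.e.\ $x\in A\cap\mathcal R_n$.
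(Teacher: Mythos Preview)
Your reduction via the bi-Lipschitz charts $\Phi_i$ and the Euclidean area formula is exactly the mechanism the paper uses. Where you diverge is in what you try to extract from it: you aim for the full ``$\haus^n$-a.e.\ $x\in A\cap\mathcal R_n$'' conclusion and correctly isolate the obstruction, namely that $\haus^n(h(B))=0$ does not by itself force $\haus^n(B)=0$. That obstruction is genuine and your density/non-collapsing sketch cannot overcome it without an extra hypothesis on $h$. Indeed, already for $X=\mathbb R^n$ and $h(x)=(\max(x_1,0),x_2,\dots,x_n)$ with $A=[-1,1]^n$ one has $\haus^n(h(A))>0$ while $h_0$ is degenerate on the half $\{x_1<0\}$; nothing in the stated hypotheses rules this out. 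So the a.e.\ conclusion, as written, is simply not obtainable from $\haus^n(h(A))>0$ alone, and your instinct that one must import the bi-Lipschitz/homeomorphism information from the intended application is correct.

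The paper's own argument does not attempt to close this gap. After passing to some chart $A_i$ with $\haus^n(h(A\cap A_i))>0$, it applies the area formula to $\tilde\Phi_i=h\circ\Phi_i^{-1}$ to obtain
\[
\int_{\Phi_i(A\cap A_i)}\sqrt{\det\big(\langle\nabla\tilde\phi_{i,j},\nabla\tilde\phi_{i,l}\rangle\big)_{jl}}\,\di\haus^n\ \ge\ \haus^n(h(A\cap A_i))>0,
\]
and then picks a Lebesgue point $x$ where the integrand is positive and $\tilde\Phi_i$ is differentiable, concluding that $h_0$ is bijective \emph{at that point}. In other words, the proof actually delivers only the existence of a positive-measure set of good points (in a single chart), not the a.e.\ statement over all of $A\cap\mathcal R_n$; this weaker conclusion is precisely what is used downstream, where a single base point $x_0$ suffices to define the push-forward inner product at $eH\in U_G/U_H$. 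Your write-up is therefore more honest about the logical content: your area-formula step matches the paper, your diagnosis of the missing step is accurate, and the full a.e.\ assertion should be read as an overstatement that the paper's proof neither establishes nor needs.
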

\begin{proof}
Let us fix a small $\epsilon>0$ and find $\{(A_i, \Phi_i)\}_{i \in \mathbb{N}}$ as in Theorem \ref{reduced}.
The local Lipschitz continuity of $h$ implies
\begin{equation*}
\haus^n \left( h(A)\setminus h\left(\bigcup_iA \cap A_i\right)\right) \le \haus^n\left( h\left(A \setminus \bigcup_iA_i\right)\right)=0.
\end{equation*}
Thus
\begin{equation*}
\haus^n\left(h\left(\bigcup_iA \cap A_i\right)\right)>0,
\end{equation*}
in particular, we have
\begin{equation*}
\haus^n\left(h(A \cap A_i)\right)>0
\end{equation*}
for some $i$ (thus $\haus^n(A_i \cap A)>0$). Applying a similar argument as in \cite{LN} to the map
\begin{equation*}
\Phi_i(A \cap A_i) \stackrel{\Phi_i^{-1}}{\to} A \cap A_i \stackrel{h}{\to} \mathbb{R}^n
\end{equation*}
with the area formula in the Euclidean setting, we obtain the desired conclusion. More precisely, denoting $\tilde \Phi_i:= h\circ \Phi_i^{-1}=(\tilde \phi_{i, 1}, \ldots, \tilde \phi_{i, n})$, we have
\begin{equation*}
\int_{\Phi_i(A \cap A_i)}\sqrt{\mathrm{det}(\langle \nabla \tilde \phi_{i, j}, \nabla \tilde \phi_{i, l}\rangle )_{jl}}\di \haus^n{\color{blue}\ge}\haus^n(h(A \cap A_i))>0.
\end{equation*}
Find $x \in \mathrm{Leb}(\Phi_i(A \cap A_i))$ such that $\tilde \Phi_i$ is differentiable at $x$ in the sense of Definition \ref{differen} and that $\mathrm{det}(\langle \nabla \tilde \phi_{i, j}, \nabla \tilde \phi_{i, l}\rangle )_{jl}(x)>0$. 
Thus for any $y \in \Phi_i(A \cap A_i)$
\begin{equation*}
\tilde \Phi_i(y)=\tilde \Phi_i(x)+J(\tilde \Phi_i)_x(y-x)+o(|y-x|).
\end{equation*}
Denoting $x=\Phi_i(z), y=\Phi_i(w)$, we have
\begin{equation*}
h(w)=h(z)+J(\tilde \Phi_i)_x(\Phi_i(w)-\Phi_i(z))+o(|\Phi_i(w)-\Phi_i(z)|).
\end{equation*}
Thus dividing by $\dist (w, z)$ in both sides above and then letting $w \to z$, we see that any $h_0$ is equal to $J(\tilde \Phi_i)_x$, up to an action by an element of $O(n)$ in the domain, whenever $\langle \nabla \phi_{i, j}, \nabla \phi_{i, l}\rangle(z)=\delta_{jl}$ holds which is valid after multiplying a matrix of size $n$ to $\Phi$ in the domain (see the proof of Lemma \ref{pointwise}). Since $J(\tilde \Phi_i)_x$ is invertible, we see that $h_0$ is bijective.
\end{proof}
\subsection{The case when $M$ is a smooth manifold}
We are now in a position to introduce the main result of Section \ref{deriv}.
\begin{corollary}\label{sss}
For $\meas$-a.e. $x \in \mathcal{R}_n=X$, there exists a unique linear map
\begin{equation*}
(Dh)_x:\mathbb{R}^n \to T_xM,
\end{equation*}
up to an action of an element in $O(n)$ in the domain, such that fixing a chart $(U, \Phi)$ around $h(x)$, for all blow-up limit maps
\begin{equation*}
(\Phi \circ h)_0=\lim_{r_i \to 0^+}(\Phi \circ h)_{r_i}: \mathbb{R}^n \to \mathbb{R}^n
\end{equation*}
and
\begin{equation*}
\tilde{(\Phi \circ h)}_0=\lim_{s_i \to 0^+}(\Phi \circ h)_{s_i}: \mathbb{R}^n \to \mathbb{R}^n
\end{equation*}
under (\ref{pmgh}),
we see that all $(\Phi \circ h)_0$,  $\tilde{(\Phi \circ h)}_0$  and $D(\Phi)_{h(x)} \circ (Dh)_x$ coincide, up to actions by elements of $O(n)$.
In particular if some $(Dh)_x$ is bijective, then we can define the push-forward scalar product $g_{h(x)}=\langle \cdot, \cdot \rangle_{h(x)}$ on $T_{h(x)}M$  by
\begin{equation*}
\langle v, w \rangle_{h(x)}:=(Dh)_x^{-1}(v)\cdot (Dh)_x^{-1}(w).
\end{equation*}
\end{corollary}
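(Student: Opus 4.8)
The plan is to reduce Corollary~\ref{sss} to the already-treated Euclidean case, Lemma~\ref{pointwise}, by composing $h$ with smooth charts and transporting the conclusion between charts through the chain rule in Lemma~\ref{asyntt}. Since $M$ is a finite-dimensional smooth manifold, it is second countable, so first I would fix a countable atlas $\{(U_\alpha, \Phi_\alpha)\}_\alpha$. For each $\alpha$ the map $\Phi_\alpha \circ h \colon h^{-1}(U_\alpha) \to \mathbb{R}^n$ is locally Lipschitz into an open subset of $\mathbb{R}^n$, so Lemma~\ref{pointwise} applies and produces a $\meas$-null set $N_\alpha \subset h^{-1}(U_\alpha) \cap \mathcal{R}_n$ off which every blow-up limit of $\Phi_\alpha \circ h$ is linear and unique up to an $O(n)$-action in the domain. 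Putting $N := \bigcup_\alpha N_\alpha$, which is again $\meas$-null, I would work with points $x \in \mathcal{R}_n \setminus N$.

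For such an $x$, choose an atlas chart with $h(x) \in U_\alpha$. As $\Phi_\alpha$ is a smooth diffeomorphism onto its image, $D(\Phi_\alpha)_{h(x)} \colon T_{h(x)}M \to \mathbb{R}^n$ is a linear isomorphism, and I would define
\[
(Dh)_x := \bigl(D(\Phi_\alpha)_{h(x)}\bigr)^{-1} \circ (\Phi_\alpha \circ h)_0 \colon \mathbb{R}^n \to T_{h(x)}M ,
\]
so that $D(\Phi_\alpha)_{h(x)} \circ (Dh)_x = (\Phi_\alpha \circ h)_0$ by construction. The uniqueness of $(Dh)_x$ up to an $O(n)$-action in the domain is then inherited verbatim from Lemma~\ref{pointwise}.

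The crux is chart-independence, which is exactly where Lemma~\ref{asyntt} enters. Given any chart $(V, \Psi)$ around $h(x)$, write $\Psi = F \circ \Phi_\alpha$ on the overlap with $F := \Psi \circ \Phi_\alpha^{-1}$, a $C^\infty$ (hence $C^2$) diffeomorphism between open subsets of $\mathbb{R}^n$; since the proof of Lemma~\ref{asyntt} is purely local, it applies to $F$ near $\Phi_\alpha(h(x))$ and yields $(\Psi \circ h)_0 = (DF)_{\Phi_\alpha(h(x))} \circ (\Phi_\alpha \circ h)_0$. On the other hand the classical chain rule on $M$ gives $D(\Psi)_{h(x)} = (DF)_{\Phi_\alpha(h(x))} \circ D(\Phi_\alpha)_{h(x)}$, so the factor $(DF)_{\Phi_\alpha(h(x))}$ cancels and one obtains $\bigl(D(\Psi)_{h(x)}\bigr)^{-1} \circ (\Psi \circ h)_0 = \bigl(D(\Phi_\alpha)_{h(x)}\bigr)^{-1} \circ (\Phi_\alpha \circ h)_0 = (Dh)_x$. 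Running the same computation with an arbitrary (not necessarily atlas) chart $\Phi$ around $h(x)$ shows both that the defining property holds for every chart and that $(\Phi \circ h)_0$, $\tilde{(\Phi \circ h)}_0$ and $D(\Phi)_{h(x)} \circ (Dh)_x$ all agree up to $O(n)$; the coincidence of the two blow-up limits follows because, after the fixed linear factor $DF$ is applied, it reduces to the corresponding statement for the atlas chart supplied by Lemma~\ref{pointwise}.

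I expect the main obstacle to be the careful tracking of the $O(n)$-ambiguities: each blow-up limit is only defined up to a rotation of the domain, and one must check these are consistent across charts so that both $(Dh)_x$ and the induced bilinear form are genuinely canonical. This is resolved at the level of the scalar product by observing that if $(Dh)_x$ is replaced by $(Dh)_x \circ Q$ with $Q \in O(n)$, then $\langle v, w\rangle_{h(x)} = (Dh)_x^{-1}(v)\cdot (Dh)_x^{-1}(w)$ is unchanged because $Q^{-1} \in O(n)$ preserves the Euclidean inner product; hence, whenever some $(Dh)_x$ is bijective, the push-forward scalar product on $T_{h(x)}M$ is well-defined and positive definite, completing the construction.
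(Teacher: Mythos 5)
Your proposal is correct and follows exactly the route the paper intends: the paper's proof of Corollary \ref{sss} is the one-line statement that it is a direct consequence of Lemmas \ref{pointwise} and \ref{asyntt}, and your argument (apply Lemma \ref{pointwise} to $\Phi\circ h$ in charts, define $(Dh)_x$ by undoing $D(\Phi)_{h(x)}$, and use the chain-rule Lemma \ref{asyntt} for chart-independence, with the $O(n)$-ambiguity absorbed by the invariance of the Euclidean inner product) is precisely the detailed version of that deduction.
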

\begin{proof}
This is a direct consequence of Lemmas \ref{pointwise} and \ref{asyntt}.
\end{proof}
\section{Proof of Theorem \ref{LHRCD}}\label{secfinal}
Let $(X, \dist, \haus^n)$ be a locally homogeneous non-collapsed $\RCD(K, n)$ space and let $h:U \subset X \to M$ be a canonical homeomorphism discussed in \cite{LN}. We need to make use of some additional structure of $M$ described in the aforementioned reference. By construction $M$ is a local quotient $U_G/U_H$ of local Lie groups $U_G,U_H$. That means that $U_G$ acts by {\color{blue}local} isometries on $X$ and by smooth maps on $M$ {\color{blue}(thus this is not an action of a group in the usual sense)}. We endow $M$ with a left invariant Riemannian metric. For this fix some point $x_0$ (we assume without loss of generality that $h(x_0)=eH \in U_G/U_H = M$ and push{\color{blue}-}forward the metric onto $T_{h(x)}M$ as in Corollary \ref{sss}. Then using the smooth action of $U_G$ on $M$ we can define a left invariant (and hence smooth) Riemannian metric $g$ on the whole of $M$ by 
    $$ g_{f H}(v,w) := g_{eH}((DL)_{f^{-1}} (v) ,(DL)_{f^{-1}}(w)), $$ where $f$ denotes an element in $U_G$ and $fH \in M=U_G/U_H$. With the induced metric $\dist_g$ the local group $U_G$ acts by isometries on $M$. Now the map $h: U \to M$ being canonical means that we have $f \circ h \circ f^{-1} = h$. \par 
As  mentioned in the introduction the map $h\colon  (U,\dist) \rightarrow (M,\dist_g)$ is locally Lipschitz (since we could construct an auxiliary Riemannian metric $g'$ such that $h \colon (U, \dist) \rightarrow (M, \dist_{g'})$ is Lipschitz, compare \cite{LN}) and hence preserves the length of absolute continuous paths $\gamma: I \rightarrow U \subset X$.    
\begin{lemma}   \label{lem:isometry}
Let $h: U \subset X \rightarrow M$ be the canonical map described above and $\gamma: I \rightarrow U$ be an absolutely continuous curve. Then we have 
$$\int_I \vert \gamma'(t) \vert \,\di t = L(\gamma)= L(h \circ \gamma)=\int_I \vert (h \circ \gamma)'(t) \vert \, \di t,$$
where in the above equation under the integrals we consider the metric derivative (see \cite{AGS}), which exists for $\mathcal{L}^1$-a.e. $t \in I$ for absolutely continuous curves. 
    \end{lemma}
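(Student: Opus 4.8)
The plan is to reduce the claimed chain of equalities to the single middle identity $L(\gamma)=L(h\circ\gamma)$. The outer two equalities are just the standard representation of the length of an absolutely continuous curve as the integral of its metric derivative (see \cite{AGS}), applied to $\gamma$ in $(X,\dist)$ and to $h\circ\gamma$ in $(M,\dist_g)$; the latter curve is absolutely continuous precisely because $h$ is locally Lipschitz. By the same representation it then suffices to prove the pointwise identity of metric derivatives
\begin{equation*}
|(h\circ\gamma)'(t)|=|\gamma'(t)|\qquad\text{for }\mathcal{L}^1\text{-a.e. }t\in I,
\end{equation*}
and to integrate over $I$.

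The pointwise identity will follow from an infinitesimal isometry property of $h$, namely
\begin{equation}
\frac{\dist_g(h(y),h(x))}{\dist(y,x)}\longrightarrow 1\qquad\text{as }y\to x,\tag{$\ast$}
\end{equation}
valid for every $x\in U$. Granting $(\ast)$, fix a $t$ at which $|\gamma'(t)|$ exists. If $|\gamma'(t)|>0$, then $\gamma(s)\neq\gamma(t)$ for $s$ near $t$, and writing
\begin{equation*}
\frac{\dist_g(h(\gamma(s)),h(\gamma(t)))}{|s-t|}=\frac{\dist_g(h(\gamma(s)),h(\gamma(t)))}{\dist(\gamma(s),\gamma(t))}\cdot\frac{\dist(\gamma(s),\gamma(t))}{|s-t|},
\end{equation*}
the first factor tends to $1$ by $(\ast)$ and the second to $|\gamma'(t)|$, yielding the claim. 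If $|\gamma'(t)|=0$, the same conclusion is immediate from the local Lipschitz bound $\dist_g(h(\gamma(s)),h(\gamma(t)))\le L\,\dist(\gamma(s),\gamma(t))$, which forces $|(h\circ\gamma)'(t)|=0$.

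It remains to establish $(\ast)$ at \emph{every} point, which I expect to be the main obstacle. The corollary containing (\ref{asy}), together with Corollary \ref{sss}, gives the chart version of $(\ast)$ at every $x$ for which $(Dh)_x$ is bijective, and passing from the frozen norm $g_{h(x)}$ to $\dist_g$ only perturbs the ratio by a factor $1+o(1)$, since in any chart the Riemannian distance agrees to first order with the norm of $g_{h(x)}$. Since $h$ is a homeomorphism onto an open subset of the $n$-manifold $M$, every open $A\subset U$ satisfies $\haus^n(h(A))>0$, so the hypothesis of Proposition \ref{surj} holds and $(Dh)_x$ is bijective, hence $(\ast)$ is valid, for $\meas$-a.e. $x$. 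The difficulty is that a generic absolutely continuous curve may spend positive time in a $\haus^n$-null set, so the $\meas$-a.e. validity of $(\ast)$ does not by itself control the behaviour along $\gamma$. To upgrade ``a.e.'' to ``everywhere'' I would use local homogeneity: the local Lie group $U_G$ acts transitively on $U$ by $\dist$-isometries and, by construction of the left invariant metric, by $\dist_g$-isometries on $M$, while $h$ is canonical, $h\circ f=f\circ h$. Hence if $(\ast)$ holds at some $x_0$ belonging to the full-measure set, then for any $x=f(x_0)$ and $y=f(y')$ one computes $\dist_g(h(y),h(x))=\dist_g(h(y'),h(x_0))$ and $\dist(y,x)=\dist(y',x_0)$, so the ratio at $x$ equals the ratio at $x_0$ and $(\ast)$ holds at $x$ as well. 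Transitivity then propagates $(\ast)$ to every point of $U$, which completes the argument.
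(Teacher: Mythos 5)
Your proof is correct and follows essentially the same route as the paper: both arguments rest on the infinitesimal isometry property at the base point $x_0$ supplied by the push-forward construction (the corollary containing (\ref{asy}), with bijectivity of the derivative from Proposition \ref{surj}) and on propagating it via the canonicality $f^{-1}\circ h\circ f=h$ together with the isometric action of $U_G$ on both sides. The only organizational difference is that you first establish the pointwise ratio limit $(\ast)$ at every point --- which is precisely the paper's Corollary \ref{cor3}, proved there immediately after the lemma by the same propagation argument --- whereas the paper conjugates the curve by $f$ and compares metric derivatives directly at $x_0$.
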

    \begin{proof}
        In \cite{LN} one could find some Riemannian metric $g'$ such that the map $h \colon (U,\dist) \rightarrow (M,g')$ is locally Lipschitz. Therefore $h$ takes absolutely continuous curves $\gamma$ to absolutely continous curves $h\circ \gamma$. Consider $t \in I$ such that the metric derivatives $\vert \gamma'(t) \vert , \vert (h\circ \gamma)'(t) \vert$ exist. Denote by $f \in U_G$ an element such that $f(\gamma(t))= x_0$ and $f^{-1}\circ h \circ f \circ \gamma(t)= h(\gamma(t))$, which is possible due to the canonicallity  of $h$. Then we have 
            $$ \vert \gamma'(t) \vert = \vert (f \circ \gamma)'(t) \vert = \vert (h \circ f \circ \gamma)'(t) \vert= \vert ( f^{-1} \circ h \circ f \circ \gamma)'(t) \vert.  $$
            Here the middle equations follows from the fact that $\vert (h \circ f \circ \gamma)'(t) \vert=\vert (h \circ f \circ \gamma)'(t) \vert_g$ if the metric is induced by a Riemannian metric and by definition of the pushforward at the point $x_0$. The other equalities follow from the fact,  that local isometries preserve the metric derivative. Since the arguments above is justified for $\mathcal{L}^1$-a.e. $t \in I$, the conclusion follows.
    \end{proof}
\begin{corollary}\label{cor3}
For all $x\in X$, $R>0$ and $\epsilon>0$, there exists $r_0>0$ such that
\begin{equation*}
\left| \frac{\dist_{g}(h(y),h(x))}{\dist(x, y)}-1\right|<\epsilon
\end{equation*}
for all $x, y \in B_R(x)$ with $\dist(x, y)\le r_0$. In particular $h^{-1}$ is locally Lipschitz.
\end{corollary}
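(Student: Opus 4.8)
The plan is to reduce the claimed two-sided estimate to a pointwise statement at a single point and then to upgrade it to a uniform one using homogeneity. Recall that $h$ is equivariant, $f\circ h\circ f^{-1}=h$ for every $f\in U_G$, that $U_G$ acts by local isometries on $(X,\dist)$ and by $\dist_g$-isometries on $M$ (since $g$ is left invariant), and that $U_G$ acts transitively on $M=U_G/U_H$, hence, through the homeomorphism $h$, locally transitively on $X$. Thus, given $x$ in the relevant region, I would pick $f\in U_G$ with $f(x_0)=x$ and, for $y$ close to $x$, set $y':=f^{-1}(y)$. Then $\dist(x_0,y')=\dist(x,y)$ because $f$ is a local isometry, while $h(x)=f(h(x_0))$ and $h(y)=f(h(y'))$ by equivariance, so $\dist_g(h(x),h(y))=\dist_g(h(x_0),h(y'))$ because $f$ is a $\dist_g$-isometry. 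Consequently the ratio $\dist_g(h(y),h(x))/\dist(x,y)$ equals $\dist_g(h(y'),h(x_0))/\dist(x_0,y')$, which reduces the whole problem to the base point $x_0$.

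Next I would establish the pointwise asymptotic
\begin{equation*}
\frac{\dist_g(h(y),h(x_0))}{\dist(x_0,y)}\longrightarrow 1\qquad\text{as }y\to x_0.
\end{equation*}
Choosing $x_0$ to be a regular point at which the blow-up is bijective — possible by Proposition \ref{surj} (applied to the domain of $h$, whose image has positive $\haus^n$-measure since $h$ is a homeomorphism onto an open subset of the $n$-manifold $M$) together with local homogeneity, which forces every point to share this property — the pushforward scalar product $g_{h(x_0)}$ of Corollary \ref{sss} is well defined and, read in a smooth chart $\Phi$ around $h(x_0)$, it is exactly the norm $|\cdot|_{h(x_0)}$ appearing in (\ref{asy}). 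Equation (\ref{asy}), applied to $\Phi\circ h$, gives $|(\Phi\circ h)(y)-(\Phi\circ h)(x_0)|_{h(x_0)}/\dist(x_0,y)\to 1$, while smoothness of $g$ yields the standard Riemannian expansion $\dist_g(h(x_0),h(y))=|(\Phi\circ h)(y)-(\Phi\circ h)(x_0)|_{h(x_0)}\,(1+o(1))$ as $h(y)\to h(x_0)$ (here I use that $h$ is a homeomorphism). Combining the two gives the asymptotic above. As a consistency check, Lemma \ref{lem:isometry} already yields the one-sided bound $\dist_g(h(y),h(x))\le\dist(x,y)$ by applying it to a geodesic, so only the lower bound genuinely requires (\ref{asy}).

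Putting the two steps together proves the estimate: given $\epsilon>0$, the pointwise asymptotic furnishes $r_0>0$ with $|\dist_g(h(y'),h(x_0))/\dist(x_0,y')-1|<\epsilon$ whenever $0<\dist(x_0,y')\le r_0$, and the reduction transports this bound verbatim to every pair $x,y$ with $\dist(x,y)\le r_0$. Finally, for the local Lipschitz continuity of $h^{-1}$, the lower bound $\dist_g(h(y),h(x))\ge(1-\epsilon)\dist(x,y)$ rewrites, with $p=h(x)$ and $q=h(y)$, as $\dist(h^{-1}(p),h^{-1}(q))\le(1-\epsilon)^{-1}\dist_g(p,q)$; since $h^{-1}$ is continuous, points $p,q$ with $\dist_g(p,q)$ small satisfy $\dist(h^{-1}(p),h^{-1}(q))\le r_0$, so the bound applies and $h^{-1}$ is locally $(1-\epsilon)^{-1}$-Lipschitz.

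The main obstacle I anticipate is bookkeeping around the local group $U_G$: one must ensure that for every $x$ in the chosen region there is a genuine $f\in U_G$ defined on a large enough ball with $f(x_0)=x$, that the identities $f\circ h=h\circ f$ and the isometry properties hold on the balls actually used, and that the local-group compositions needed to realize transitivity remain within the domain of definition; restricting to a neighborhood on which these operations are defined handles this. The second, more analytic point is to make the passage from the chart norm $|\cdot|_{h(x_0)}$ in (\ref{asy}) to the genuine Riemannian distance $\dist_g$ rigorous, but this is the standard comparison of $\dist_g$ with the frozen-coefficient norm of a smooth metric.
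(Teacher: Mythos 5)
Your proposal is correct and follows essentially the same route as the paper's (much terser) proof: the paper likewise derives the pointwise asymptotic at the base point $x_0$ from (\ref{asy}) and then invokes the canonicity (equivariance) of $h$ under the local isometries of $U_G$ to transport the estimate to all nearby pairs of points. Your additional care about the bijectivity of the blow-up via Proposition \ref{surj} and the comparison between $\dist_g$ and the frozen norm $|\cdot|_{h(x_0)}$ fills in details the paper leaves implicit.
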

\begin{proof}
By (\ref{asy}) we have for $x_0 \in U \subset X$
\begin{equation*}\label{asy2}
\frac{\dist_{g}(h(y), h(x_0))}{\dist(y, x_0)} \to 1, \quad \text{for any $y \to x_0$.}
\end{equation*}
Now, since $h$ is canoncial, the claim follows.
\end{proof}

\begin{corollary}
We see that $h$ is an isometry.
\end{corollary}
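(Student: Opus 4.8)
The plan is to exploit that both $(X,\dist)$ and $(M,\dist_g)$ are length spaces together with the two facts already at hand: that $h$ preserves the length of absolutely continuous curves (Lemma \ref{lem:isometry}), and that $h$ and $h^{-1}$ are both locally Lipschitz with multiplicative distortion tending to $1$ (Corollary \ref{cor3}). Since the assertion is local, I would first fix a base point $x_0 \in U$ and, using Corollary \ref{cor3}, choose $r_0>0$ small enough that $h$ is bi-Lipschitz on $B_{r_0}(x_0)$ with constants close to $1$ and that $h(B_{r_0}(x_0))$ lies in a geodesically convex neighborhood of $h(x_0)$ in $M$. It then suffices to verify $\dist_g(h(x),h(y))=\dist(x,y)$ for all $x,y$ in a sufficiently small ball around $x_0$, and then to transport this to every point of $X$ via the local homogeneity.

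For the inequality $\dist_g(h(x),h(y)) \le \dist(x,y)$, recall that $(X,\dist)$ is geodesic by Theorem \ref{Theorem: RCD is intrinsic}. I would take a unit-speed minimizing geodesic $\gamma$ from $x$ to $y$; since its length equals $\dist(x,y)$ it stays in a small ball about $x$, hence in $U$, and being Lipschitz it is absolutely continuous. As $h$ is locally Lipschitz, $h\circ\gamma$ is an absolutely continuous curve in $M$ joining $h(x)$ to $h(y)$, so Lemma \ref{lem:isometry} gives $L(h\circ\gamma)=L(\gamma)=\dist(x,y)$, whence $\dist_g(h(x),h(y)) \le L(h\circ\gamma)=\dist(x,y)$.

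For the reverse inequality, given $\eta>0$ I would choose a curve $\sigma$ in $M$ from $h(x)$ to $h(y)$ with $L(\sigma)\le \dist_g(h(x),h(y))+\eta$; by Corollary \ref{cor3} the two endpoints are $\dist_g$-close once $x,y$ are close, so $\sigma$ may be taken inside $h(U)$. Because $h^{-1}$ is locally Lipschitz (Corollary \ref{cor3}), the curve $h^{-1}\circ\sigma$ is absolutely continuous in $U$ and joins $x$ to $y$, and Lemma \ref{lem:isometry} applied to it yields $L(h^{-1}\circ\sigma)=L\bigl(h\circ(h^{-1}\circ\sigma)\bigr)=L(\sigma)$. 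Hence $\dist(x,y)\le L(h^{-1}\circ\sigma)=L(\sigma)\le \dist_g(h(x),h(y))+\eta$, and letting $\eta\to 0$ closes the argument. Combining the two inequalities shows $h$ is a local isometry.

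The only genuinely delicate point, and the step I expect to require the most care, is the localization: one must guarantee that the minimizing geodesic in $X$ and the near-minimizing curve in $M$ both remain inside $U$, respectively $h(U)$, so that Lemma \ref{lem:isometry} is actually applicable. This is exactly what the scale comparison in Corollary \ref{cor3} provides, since $\dist$ and $\dist_g$ are mutually comparable with constants approaching $1$; for points in a sufficiently small ball both families of connecting curves stay in the relevant neighborhoods, and the conceptual content reduces to the elementary fact that a homeomorphism between length spaces preserving curve length in both directions is a local isometry.
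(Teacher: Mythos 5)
Your proposal is correct and follows essentially the same route as the paper's own proof: one direction from Lemma \ref{lem:isometry} applied to geodesics in $X$ (giving that $h$ is $1$-Lipschitz), and the other from the local Lipschitz continuity of $h^{-1}$ in Corollary \ref{cor3}, which lets you pull back near-minimizing curves in $M$ and apply the length-preservation argument again. You merely spell out in more detail the localization step that the paper leaves implicit.
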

\begin{proof}
Lemma \ref{lem:isometry} shows that $h$ is a $1$-Lipschitz map. From Corollary \ref{cor3} we see  that $h^{-1}$ is locally Lipschitz and thus takes absolutely continuous paths to absolutely continuous paths. The same argument as in Lemma \ref{lem:isometry} implies that $h$ preserves the distances, namely
\begin{equation*}
\dist(x, y)=\dist_{g}(h(x), h(y)),\quad \text{for all $x, y \in X$.}
\end{equation*}
\end{proof}

\section{Proof of Corollary \ref{finite}}
Though the proof is quite standard, we give a proof for readers' convenience.
\begin{proof}[Proof of Corollary \ref{finite}]
We provide proofs of (1) and (3) only because a very similar argument allows us to get (2). 

Firstly let us prove (1) by a contradiction. If it is not the case, then there exists a sequence of non-collapsed $\RCD(K, n)$ spaces $(X_i, \dist_i, \haus^n)$ with $\mathrm{diam}X_i \le d$ and $\haus^n(X_i)\ge v$ such that 
\begin{equation}\label{almost hom2}
    \dist_{\mathrm{pGH}}\left((B_s(x_i),x_i), (B_s(y_i), y_i) \right) \le \epsilon_is
    \end{equation}
    for all $x_i, y_i \in X_i$ and $s \in (0, r]$,
and that $(X_i, \dist_i, \haus^n)$ is uniformly far from any locally homogeneous compact Riemannian manifold. After passing to a subsequence, the sequence mGH-converges to a non-collapsed compact $\RCD(K,n)$ space $(X, \dist, \haus^n)$.  Thanks to (\ref{almost hom2}), we know that $(X, \dist, \haus^n)$ is locally homogeneous. Thus by Theorem \ref{LHRCD}, it is smooth. Therefore applying \cite[Theorem A.1.2]{CheegerColding1} proves that $X_i$ is homeomorphic to $X$ for any sufficiently large $i$ (see also \cite[Theorem 1.2]{HP}). This is a contradiction.
    
    Finally let us prove (3). 
    The proof is also done by a contradiction. 
    
    Assume that the assertion is not satisfied. Then there exist $r>0$, a sequence $\epsilon_i \to 0^+$ and a sequence of Riemannian manifolds $(M_i, g_i)$ of dimension $n$ with $\mathrm{Ric}_{g_i} \ge K$, $\mathrm{diam}_{g_i}M_i\le d$ and  $\mathrm{vol}_{g_i}M_i \ge v$ such that $M_i$ is not diffeomorphic to $M_j$ for all $i \neq j$, and that 
    \begin{equation*}\label{almost hom}
    \dist_{\mathrm{pGH}}\left((B_s(x_i),x_i), (B_s(y_i), y_i) \right) \le \epsilon_is.
    \end{equation*}
    holds for all $x_i, y_i \in M_i$ and $s \in (0, r]$. Then as in the case of (1), after passing to a subsequence, $(M_i, \dist_{g_i}, \haus^n)$ mGH-converge to a locally homogeneous Riemannian manifold $(X, \dist, \haus^n)$. Therefore applying  \cite[Theorem A.1.12]{CheegerColding1}, we see that $M_i$ is diffeomorphic to $X$ for any sufficiently large $i$. This, in particular, implies that $M_i$ is diffeomorphic to $M_j$ for all sufficiently large $i, j$. Thus we have a contradiction.
\end{proof}
\begin{acknowledgement}
We would like to thank Christian Ketterer and Alexander Lytchak for helpful discussions. {\color{blue}They wish to thank the referee for valuable comments for a revision and are also grateful to Diego Corro for pointing out a reference \cite{Santos}.}
This note has been written for the special volume collecting the Proceedings of the meetings
“Analysis, Geometry and Stochastics on Metric Spaces” and “Metrics and Measures” that were held in RIMS and Tohoku on September 25-29, 2023.
The second named author is grateful to the Organizers of the meetings, for the kind invitation and the generous hospitality.
The second named author was partially supported by the DFG grant SPP 2026 project 24 and project 52.
The first named author acknowledges supports of the Grant-in-Aid for Scientific Research (B) of 20H01799, the
Grant-in-Aid for Scientific Research (B) of 21H00977 and Grant-in-Aid for Transformative
Research Areas (A) of 22H05105.
\end{acknowledgement}

\end{document}